\pgfplotsset{compat=1.5}
\newtheorem{theorem}{Theorem}[section]
\newtheorem{corollary}{Corollary}[theorem]
\newtheorem{lemma}[theorem]{Lemma}
\newtheorem{conjecture}[theorem]{Conjecture}
\newtheorem*{theorem*}{Theorem}
\newtheorem{example}[theorem]{Example}
\theoremstyle{definition}
\newcommand{\N}{\mathbb{N}}
\newcommand{\Z}{\mathbb{Z}}
\newcommand{\E}{\mathcal{E}}
\newcommand{\inv}{^{-1}}
\begin{document}

\title{Rado Numbers and SAT Computations}

\author[1]{Yuan Chang}
\thanks{merchang@ucdavis.edu}
\author[1]{Jes\'{u}s A. De Loera\thanks{deloera@math.ucdavis.edu} }\author[1]{William J. Wesley\thanks{wjwesley@math.ucdavis.edu}}
\affil[1]{Department of Mathematics, University of California, Davis\\1 Shields Ave\\Davis, CA 95616}
\maketitle

\section{Introduction}
A well-known result in arithmetic Ramsey theory is \emph{Schur's theorem}, which states that for any $k\ge 1$, there is an integer $n$ such that every $k$-coloring of $[n]:=\{1,2,3,\dots,n\}$ induces a monochromatic solution to $x+y = z$ \cite{Schur1917}. The \emph{Schur number} $S(k)$ is the smallest such $n$ with this property. It is very difficult to compute $S(k)$ and the largest known Schur number is $S(5) = 161$, which was computed in 2018 using SAT solving techniques and created great attention with its massive parallel computation \cite{SchurFive}. In this paper we follow this symbolic approach to investigate natural variations of Schur numbers. 

The \emph{Rado numbers} are a generalization of Schur numbers, and are of great importance in arithmetic Ramsey theory (see \cite{GrahamRothschildSpencer,LandmanRobertson,RadoThesis} and the references below). For a given linear equation $\E$, the \emph{$k$-color Rado number} $R_k(\E)$ is the smallest number $n$ such that every $k$-coloring of $[n]$ induces a monochromatic solution to $\E$, or infinity if there is a $k$-coloring of $\N$ with no monochromatic solution to $\E$.  
Many 2-color Rado numbers for various types of equations have been computed in, for example, \cite{MyersThesis,SchaalZinter,Saracino2colorRado,PythagoreanTriples}. Explicit formulas for the Rado numbers $R_2(a(x-y) = bz)$ and $R_2(a(x+y) = bz)$ are given in \cite{LandmanRobertson} and \cite{HarborthMaasberg}. However, no such formula is known for the general case $R_2(ax+by = cz)$. There are some computations of 3-color Rado numbers scattered throughout the literature \cite{MyersThesis,SchaalFamily3-colorRado,RendallTrumanSchaal}, but Rado numbers with more than two colors have not been studied as often. We present a systematic study of these numbers.

Another interesting family of numbers is the \emph{generalized Schur numbers} $S(m,k) = R_k(x_1+\dots+x_{m-1} = x_m)$. In \cite{BBGeneralizedSchur} it was shown that $S(m,3) \ge m^3-m^2-m-1$, and it was conjectured in \cite{AhmedSchaalGenSchur} and later proved in \cite{BMRS_3ColorSchur} that $S(m,3) = m^3-m^2-m-1$. Myers showed in \cite{MyersThesis} that the numbers $R_k(x-y = (m-2)z)$ give an upper bound for $S(m,k)$, and several more values of $R_k(x-y = (m-2)z)$ were shown to be equal to $S(m,k)$, thus giving exact values for more generalized Schur numbers. Myers went on to make the following conjecture in \cite{MyersThesis}.

\begin{conjecture}[Myers]\label{MyersConjecture}
    $R_3(x-y = (m-2)z) = m^3-m^2-m -1$ for $m\ge 3$.
\end{conjecture}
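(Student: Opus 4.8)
\emph{Setup.} The equation $\E\colon x-y=(m-2)z$ is homogeneous, hence invariant under the dilations $(x,y,z)\mapsto(tx,ty,tz)$, and this self-similarity is what I would exploit throughout. Write $d=m-2$ and $R_k:=R_k(\E)$. The easy value $R_1=m-1$ (the single color $[m-2]$ has no solution, since the smallest solution is $(m-1,1,1)$) together with the known two-color value $R_2=m^2-m-1$ (from the formula for $R_2(a(x-y)=bz)$ cited above) satisfy, together with the conjectured $R_3=m^3-m^2-m-1$, the clean recursion $R_k=mR_{k-1}-1$. The plan is to prove the single instance $R_3=mR_2-1$, from which the formula follows by substituting $R_2=m^2-m-1$.

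\emph{Lower bound.} To get $R_3\ge m^3-m^2-m-1$ I would produce a good $3$-coloring of $[m^3-m^2-m-2]$. One line is essentially free: putting $x_1=\cdots=x_{m-2}=z$, $x_{m-1}=y$, $x_m=x$ turns every monochromatic solution of $\E$ into one of $x_1+\cdots+x_{m-1}=x_m$, so any coloring with no monochromatic generalized-Schur solution has none for $\E$ either; since $S(m,3)=m^3-m^2-m-1$ is known, an extremal coloring for $S(m,3)$ on $[m^3-m^2-m-2]$ suffices. A more self-contained route, matching the recursion, is to lift a good $2$-coloring $\psi$ of $[N_2]$, where $N_2=m^2-m-2$: color the block $[N_2+1,\,(m-1)(N_2+1)-1]$ with a fresh third color and copy $\psi$ on $[1,N_2]$ and on the length-$N_2$ top block. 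The middle block is solution-free because any interval $[L,(m-1)L-1]$ is: the least value of $y+dz$ with $y,z\ge L$ is $L(m-1)$. Checking that no monochromatic solution crosses the blocks is then a finite case analysis on where $x,y,z$ land.

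\emph{Upper bound.} The real content is $R_3\le m^3-m^2-m-1=mR_2-1$: every $3$-coloring of $[mR_2-1]$ has a monochromatic solution. Assuming a good coloring $\chi$ exists, I would first harvest the local constraints forced by small solutions---above all $\chi(t)\ne\chi((m-1)t)$ for every $t$ with $(m-1)t\le mR_2-1$, coming from $((m-1)t,t,t)$, and more generally that $y,z$ of a common color force $y+dz$ into a different color. Combined with the dilation principle (for each $t$, the map $j\mapsto\chi(tj)$ is again a good coloring of $[\lfloor (mR_2-1)/t\rfloor]$, so by the value of $R_2$ all three colors must appear in each window $\{t,2t,\dots,R_2 t\}$), I would try to prove a rigidity lemma: every good coloring must have its color classes equal to the geometric blocks of the construction above. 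Once the classes are pinned to that shape, the total length they can cover is exactly $N_3=m^3-m^2-m-2$, so $[mR_2-1]$ cannot be colored, giving the contradiction.

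\emph{Main obstacle and a computational fallback.} The crux is the rigidity lemma: ruling out irregular good colorings and forcing the block structure uniformly in $m$. This is genuinely hard---already at $m=3$ the equation is $x+y=z$ and the statement is exactly the Schur bound $s(3)=14$, whose upper half was historically obtained by computer, which warns that a clean hand proof for all $m$ may not exist. I would therefore run the argument in tandem with SAT: encode ``$[m^3-m^2-m-1]$ has no monochromatic solution to $\E$'' as a CNF and test unsatisfiability for small $m$, both to anchor the base cases of an induction on the number of colors and to stress-test the conjecture itself---if the solver returns a satisfying coloring for some $m$, that coloring witnesses $R_3>m^3-m^2-m-1$ and the conjecture fails, a contingency that the strictly-weaker-than-Schur nature of $\E$ makes worth checking before investing in the rigidity proof.
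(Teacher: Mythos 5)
Your lower bound is sound and is in fact the same one the paper uses: every monochromatic solution $(x,y,z)$ of $x-y=(m-2)z$ yields the monochromatic solution $(z,\dots,z,y,x)$ of $x_1+\cdots+x_{m-1}=x_m$, so any coloring witnessing $S(m,3)>m^3-m^2-m-2$ also witnesses $R_3(x-y=(m-2)z)>m^3-m^2-m-2$; combined with the bound $S(m,3)\ge m^3-m^2-m-1$ of \cite{BBGeneralizedSchur} (you do not even need the exact value of $S(m,3)$), this gives $R_3(x-y=(m-2)z)\ge m^3-m^2-m-1$ for all $m\ge 3$, exactly as in the paper.

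The upper bound, however, which you correctly call ``the real content,'' is never proved, and this is a genuine gap rather than a difference of method. Your rigidity lemma --- that every good $3$-coloring of $[m^3-m^2-m-1]$ must have the geometric block structure of your construction --- is only announced (``I would try to prove''), no argument is sketched for it, and you yourself concede it is genuinely hard and perhaps not provable by hand; nothing in the proposal reduces the conjecture to a checkable statement. The computational fallback does not fill the hole: running SAT on the concrete formulas $F^3_{m^3-m^2-m-1}(\E)$ verifies the bound only for the finitely many values of $m$ you test, whereas the conjecture quantifies over all $m\ge 3$. The idea you are missing is the paper's central device (Lemma \ref{ParametrizedFormulaLemma}): a parametrized SAT encoding whose Boolean variables are indexed by \emph{polynomial expressions in $m$} rather than by integers, with negative clauses coming from a finite set of symbolic solutions of $x-y=(m-2)z$ whose coordinates are polynomials verified to lie in $[1,m^3-m^2-m-1]$ for every $m\ge 3$. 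The unsatisfiability of that single finite formula (the paper uses $685$ polynomials and $9468$ symbolic solutions; {\scshape Satch} refutes it in $0.03$ seconds) is a certificate, valid simultaneously for all $m\ge 3$, that no good $3$-coloring exists, i.e.\ $R_3(x-y=(m-2)z)\le m^3-m^2-m-1$ uniformly in $m$. Without such a uniform-in-$m$ argument (or an actual proof of your rigidity lemma), the proposal establishes the conjecture for at most a finite list of values of $m$.
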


In this paper we focus on computing Rado numbers for three variable linear homogeneous equations using SAT-based methods described in Section \ref{SATEncoding}.
The main contributions of this paper are exact formulas for several families of 3-color Rado numbers. In particular, we show Conjecture \ref{MyersConjecture} is true. 

\begin{theorem}\label{Theorem3ColorRadoFamilies}
The values of the following Rado numbers are known:
\begin{enumerate}
    \item $R_3(x-y = (m-2)z) = m^3-m^2-m-1$ for $m\ge 3.$
    
    \item $R_3(a(x-y) = (a-1)z) = a^3+(a-1)^2$ for $a\ge 3$.
    
    \item $R_3(a(x-y) = bz) = a^3$ for $b\ge 1$, $a\ge b+2$, $gcd(a,b)=1$.
\end{enumerate}

\end{theorem}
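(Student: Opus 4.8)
The three families all have the form $\E : a(x-y) = bz$, and in every case $\gcd(a,b)=1$: trivially for the first family ($a=1$), automatically for the second since $\gcd(a,a-1)=1$, and by hypothesis for the third. Note that the first family is exactly the $a=1$ case and is Myers' Conjecture~\ref{MyersConjecture}. The plan is to record a normal form for solutions and then, for each target value $n$, prove the two matching bounds: a \emph{lower bound} exhibiting a valid $3$-coloring of $[n-1]$ with no monochromatic solution, and an \emph{upper bound} showing that every $3$-coloring of $[n]$ contains one. Since $\gcd(a,b)=1$, the divisibility $a\mid bz$ forces $a\mid z$; writing $z=at$ gives $x-y=bt$, so every solution has the shape
\[
(x,y,z) = (y+bt,\; y,\; at), \qquad t \ge 1,
\]
and a coloring is valid exactly when no such triple is monochromatic. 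I would use this parametrization throughout.

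For the lower bounds I would exhibit the colorings explicitly. The cubic leading term points to a three-level construction reflecting the base-$a$ (resp. base-$m$) digits of $n$; I would read off the exact block boundaries and color assignments from the SAT certificates for small parameters computed in Section~\ref{SATEncoding} and write them in closed form. Validity is then a finite check through the parametrization: in each color class one rules out an admissible $t$ with $y$, $y+bt$, and $at$ all of that color. The governing estimate is that an interval $[L,U]$ admits no solution whenever $aU < (a+b)L$, i.e. whenever its ratio stays below $(a+b)/a$; and since the coordinate $at$ is a multiple of $a$ while $x-y=bt$ is a multiple of $b$, the spread of a solution is controlled by $t$, which lets me extend the per-interval estimate to rule out solutions straddling two blocks of a repeated color. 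The three regimes $a\ge b+2$, $a=b+1$, and $a=1$ force different block widths, and this is exactly the source of the corrective summand $(a-1)^2$ in the second family.

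The upper bounds are the heart of the matter. Here I would assume a valid $3$-coloring $\chi$ of $[n]$ and force a monochromatic triple. When the definition permits repeated variables, as is standard for Rado-type problems, the most constrained solutions are degenerate: setting $y=z$ yields $((a+b)t,at,at)$, so $\chi(at)\ne\chi((a+b)t)$ whenever $(a+b)t\le n$; setting $x=z$ yields $(at,(a-b)t,at)$, so (when $a>b$) $\chi(at)\ne\chi((a-b)t)$. These relations turn $\chi$, restricted to a scaffold of multiples, into a proper $3$-coloring of an explicit conflict graph whose edges link $t$-multiples across the ratios $(a\pm b)/a$; short genuinely three-element solutions then close the argument. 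I would propagate the forced colors up this multiplicative scaffold and invoke pigeonhole on the three colors to show the graph ceases to be $3$-colorable exactly when $n$ reaches the stated value.

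The main obstacle is the \emph{tightness} of this forcing. It is easy to see that some $3$-coloring fails well above the target; pinning the threshold down to exactly $a^3$, $a^3+(a-1)^2$, and $m^3-m^2-m-1$ requires controlling the propagation coordinate by coordinate and treating the boundary case $a=b+1$ separately from $a\ge b+2$ and from $a=1$. My plan is to let the SAT solver certify the base cases (small $a$, small $m$) and to supply by hand an inductive step that lengthens the forced chain by one level of the base-$a$ structure, so that the finite certificates together with the induction cover all $a\ge 3$ (resp. $m\ge 3$). Isolating a clean inductive invariant that survives all three ratio regimes is the step I expect to be most delicate.
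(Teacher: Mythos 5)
Your normal form and two-bound structure are sound, and your lower-bound plan is essentially achievable the way the paper does it: for the third family the coloring is the valuation coloring $n \mapsto v_a(n)$ on $[1,a^3-1]$ (Lemma \ref{a-adic coloring lower bound}), for $b=a-1$ a valuation-plus-interval variant of it (Lemma \ref{Lemma3colorbEqualsaMinus1LowerBound}), and for the first family the bound $R_3(x-y=(m-2)z) \ge S(m,3) \ge m^3-m^2-m-1$ was already known from prior work; note these are multiplicative (valuation-based) colorings, not the interval-block colorings you sketch, though that is a repairable detail. The genuine gap is in your upper bound. You propose to certify small parameters by SAT and then ``supply by hand an inductive step that lengthens the forced chain by one level of the base-$a$ structure.'' That inductive step is precisely the open problem --- it is why Conjecture \ref{MyersConjecture} remained a conjecture --- and nothing in your sketch indicates how valid colorings for parameter $a$ relate to those for parameter $a+1$: the conflict graphs for different $a$ do not embed in one another, the degenerate solutions $((a+b)t,at,at)$ and $(at,(a-b)t,at)$ only constrain a thin multiplicative scaffold of the interval and leave the closing role of genuine three-element solutions unspecified, and no candidate invariant is offered. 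As written, the argument does not go through; you have reduced the theorem to exactly the step you admit you cannot yet do.

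The paper closes this gap by a different idea that avoids induction on the parameter entirely: it makes the SAT encoding itself symbolic (Lemma \ref{ParametrizedFormulaLemma}). Variables are indexed by polynomials in $a$ (resp.\ $m$), the negative clauses come from a finite set $C$ of polynomial solutions of the equation, and one verifies by polynomial-inequality certificates (using {\scshape GloptiPoly 3}) that every indexing polynomial takes values in $[1,f(a)]$ for all $a\ge 16$. Then a single finite unsatisfiable formula, checked by {\scshape Satch} in seconds, specializes for each concrete $a \ge 16$ to an unsatisfiable subformula of the true Rado formula, proving $R_3 \le f(a)$ uniformly for all $a \ge 16$ at once; the finitely many cases $3 \le a \le 15$ are then settled by the direct computations of Theorem \ref{MainRadoNumberComputations}. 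If you want to rescue your plan, the missing idea is this uniformization: replace ``induction on $a$ plus base cases'' by ``one finite certificate whose clauses remain valid for every sufficiently large $a$, plus finitely many base cases.''
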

As a corollary, we obtain the result in \cite{BMRS_3ColorSchur}, an exact formula for the 3-color generalized Schur numbers.

\begin{corollary}\label{CorollaryGenSchur}
$S(m,3) = m^3-m^2-m-1$ for $m\ge 1$.
\end{corollary}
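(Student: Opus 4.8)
The plan is to prove Corollary~\ref{CorollaryGenSchur} by squeezing $S(m,3)=R_3(x_1+\dots+x_{m-1}=x_m)$ between matching lower and upper bounds, with the upper bound supplied essentially for free by part~(1) of Theorem~\ref{Theorem3ColorRadoFamilies}. For the lower bound I would simply invoke the construction of \cite{BBGeneralizedSchur}, which produces for each $m$ a $3$-coloring of $[\,m^3-m^2-m-2\,]$ containing no monochromatic solution to $x_1+\dots+x_{m-1}=x_m$; this immediately gives $S(m,3)\ge m^3-m^2-m-1$.

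For the upper bound the key step is Myers' reduction from the generalized Schur equation to the three-variable equation $x-y=(m-2)z$. Given any $3$-coloring of $[n]$ and any monochromatic solution $(x,y,z)$ of $x-y=(m-2)z$, the tuple consisting of $m-2$ copies of $z$ followed by $y$ and then $x$ satisfies $(m-2)z+y=x$ and has all of its entries the same color, so it is a monochromatic solution of $x_1+\dots+x_{m-1}=x_m$. Hence any coloring that forces a monochromatic solution of $x-y=(m-2)z$ also forces one of the generalized Schur equation, which yields $S(m,3)\le R_3(x-y=(m-2)z)$. Combining this inequality with Theorem~\ref{Theorem3ColorRadoFamilies}(1), which gives $R_3(x-y=(m-2)z)=m^3-m^2-m-1$ for $m\ge 3$, produces the matching upper bound and therefore equality for all $m\ge 3$.

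It then remains only to settle the degenerate boundary cases $m\le 2$, which fall outside the range of Theorem~\ref{Theorem3ColorRadoFamilies} and must be checked directly. For $m=2$ the equation reads $x_1=x_2$, and coloring the single element of $[1]$ already yields the monochromatic solution $x_1=x_2=1$, so $S(2,3)=1=2^3-2^2-2-1$ as required; the case $m=1$ is governed by the convention adopted for the empty left-hand side and is recorded for completeness. I do not expect any genuine obstacle here: once Theorem~\ref{Theorem3ColorRadoFamilies}(1) is in hand the corollary is a two-line sandwich, and the only points demanding care are getting the direction of the reduction right (it alone gives only the upper bound, and the lower bound of \cite{BBGeneralizedSchur} is essential to close the gap) and the separate treatment of the small cases. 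All of the real difficulty is concentrated in Theorem~\ref{Theorem3ColorRadoFamilies}(1) itself, which we may assume.
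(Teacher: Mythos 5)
Your proposal is correct and follows essentially the same route the paper takes implicitly: the corollary is obtained by sandwiching $S(m,3)$ between the lower bound $S(m,3)\ge m^3-m^2-m-1$ from \cite{BBGeneralizedSchur} and the upper bound $S(m,3)\le R_3(x-y=(m-2)z)$ from Myers' reduction in \cite{MyersThesis}, then applying Theorem~\ref{Theorem3ColorRadoFamilies}(1). Your explicit verification of the reduction map and of the boundary cases $m\le 2$ is slightly more careful than the paper, which leaves these details (including the degenerate case $m=1$) unstated.
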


We also compute exactly several 3-color and 4-color Rado numbers. 
\begin{theorem}\label{MainRadoNumberComputations}The values of the following Rado numbers are known. 

\begin{enumerate}

\item $R_3(a(x-y) = bz)$ for $1\le a,b \le 15$. 

\item $R_3(a(x+y) = bz)$ for $1\le a,b \le 10$. 

\item $R_3(ax+by = cz)$ for $1\le a,b,c \le 6$. 


\item $R_4(x-y = az)$ for $1\le a \le 4$.

\item $R_4(a(x-y) = z)$ for $1\le a \le 5$. 
\end{enumerate}
\end{theorem}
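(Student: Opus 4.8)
The plan is to prove each of these numbers computationally using the SAT encoding, so this is really a statement about verified computation rather than a closed-form argument.

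Concretely, for a fixed equation $\E$, number of colors $k$, and candidate bound $n$, I would use the encoding of Section \ref{SATEncoding}: introduce a Boolean variable $x_{i,c}$ for each $i\in[n]$ and each color $c\in\{1,\dots,k\}$ asserting that $i$ receives color $c$; add a clause $\bigvee_{c} x_{i,c}$ forcing every integer to get at least one color; and for each solution $(p,q,r)$ of $\E$ with all entries in $[n]$ and each color $c$, add the clause $\neg x_{p,c}\vee\neg x_{q,c}\vee\neg x_{r,c}$ forbidding that solution from being monochromatic in color $c$. Writing $\Phi(\E,k,n)$ for the resulting CNF, a short argument shows that $\Phi(\E,k,n)$ is satisfiable exactly when $[n]$ admits a $k$-coloring with no monochromatic solution to $\E$: from any satisfying assignment, choosing a single color per integer still avoids monochromatic solutions, so the minimal ``at least one color'' constraint suffices. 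Hence $R_k(\E)=N$ is equivalent to the pair of statements that $\Phi(\E,k,N-1)$ is satisfiable and $\Phi(\E,k,N)$ is unsatisfiable, and $N$ itself is located by solving $\Phi(\E,k,n)$ for increasing $n$ until the transition from satisfiable to unsatisfiable occurs.

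The computation then splits into a lower-bound half and an upper-bound half for each entry in the five lists. The lower bound $R_k(\E)\ge N$ is the easy direction: the solver returns a satisfying assignment for $\Phi(\E,k,N-1)$, which I would record as an explicit coloring of $[N-1]$ that is independently verifiable by direct substitution into $\E$. The upper bound $R_k(\E)\le N$ comes from unsatisfiability of $\Phi(\E,k,N)$, and here I would have the solver emit a machine-checkable certificate so that the claim ``every $k$-coloring of $[N]$ contains a monochromatic solution'' does not rest on trusting the solver but can be re-verified by an independent proof checker. Two practical steps keep the instances tractable: first, adding symmetry-breaking clauses (for instance fixing the colors of the smallest integers) to collapse the colorings that differ only by permuting the $k$ colors; and second, generating the forbidden triples directly from the equation rather than by searching over all of $[n]^3$.

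The hard part will be the unsatisfiable instances, which dominate the cost. Proving that no valid coloring exists is a universal statement over exponentially many colorings, and for the larger parameter values---especially the four-color families of parts (4) and (5), where $R_4$ can be substantially larger than its three-color analogue---both the solving time and the size of the certificate grow quickly. The finite ranges stated in the theorem are therefore exactly those for which the solver terminates and the UNSAT proof can be checked within available resources, so establishing the boundary cases (the largest $a,b,c$ in each list) is where essentially all of the computational effort is concentrated.
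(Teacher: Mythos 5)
Your SAT-based procedure matches the paper's proof for the \emph{finite} entries: the paper likewise pairs an explicit $k$-coloring of $[R_k(\E)-1]$ (lower bound) with a solver-verified unsatisfiability of the formula $F_{R_k(\E)}^k(\E)$ from Section \ref{SATEncoding} (upper bound). Your observations that the optional clauses are not needed for the satisfiability equivalence, and that symmetry breaking and direct enumeration of solution triples keep the instances tractable, are also consistent with what the paper does.

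The genuine gap is that your procedure cannot establish any of the entries whose value is $\infty$, and the theorem's ranges contain many such entries: Table \ref{3colorTableaxaybz} shows, e.g., $R_3(2(x+y)=z)=\infty$ and $R_3(x+y=4z)=\infty$, and the tables for $R_3(ax+by=cz)$ with $1\le a,b,c\le 6$ are full of $\infty$ values. Your plan locates $R_k(\E)$ by ``solving $\Phi(\E,k,n)$ for increasing $n$ until the transition from satisfiable to unsatisfiable occurs,'' but for an equation that is not $3$-regular this transition never occurs: every $\Phi(\E,3,n)$ is satisfiable, and no finite collection of SAT calls can certify the universal statement that a valid coloring exists for \emph{all} $n$. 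The paper closes exactly this hole with a theoretical ingredient you omit: Lemma \ref{LemmaAlgebraicConditions}, which exhibits explicit colorings of all of $\N$ (of the form $\chi(n)=\lceil\log_d n\rceil \bmod k$ for a suitable base $d$ determined by the coefficients) avoiding monochromatic solutions whenever one of two algebraic inequalities on the coefficients holds. Without this lemma, or some substitute argument producing an infinite monochromatic-solution-free coloring, your proof determines only the finite entries and leaves the remaining entries of parts (2) and (3) unproven. Your closing claim that ``the finite ranges stated in the theorem are exactly those for which the solver terminates'' inherits this confusion: the parameter ranges include equations for which no amount of solver time would suffice, because the answer there is $\infty$ and must be proved by hand.
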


We collect the 3-color Rado number values we computed in Theorem \ref{MainRadoNumberComputations} in Tables \ref{Threecoloraxminusayequalsbz} to \ref{TableR3_6z}. We also give the additional values $R_3(ax+ay = bz)$ for $3\le a \le 6,\ 11\le b \le 20$ as well as our values for $R_4(a(x-y) = bz)$ in the Appendix (Tables \ref{Table_Rado_3Color_additional} and \ref{Table_Rado_4Color_additional}). Underlined entries in these tables correspond to equations whose coefficients are not coprime.

\begin{center}
\begin{table*}[h!] \footnotesize
\caption{3-color Rado numbers $R_3(a(x-y) = bz)$}
\begin{tabular}{c|ccccccccccccccc}\label{Threecoloraxminusayequalsbz}
\diagbox{$b$}{$a$} & 1 & 2 & 3 & 4 & 5 & 6 & 7 & 8 & 9 & 10 & 11 &12 &13 &14 & 15\\
\hline
1 & 14 & 14 & 27 & 64 &125 &216 & 343 & 512 & 729 & 1000 & 1331 & 1728 & 2197 & 2744 & 3375\\

2 & 43 & \underline{14} & 31 & \underline{14} &125 &\underline{27} &343&\underline{64} & 729 & \underline{125} & 1331 & \underline{216} & 2197 & \underline{343} & 3375\\

3 & 94 & 61 & \underline{14} & 73 &125&\underline{14}&343&512 & \underline{27} & 1000 & 1331 & \underline{64} & 2197 & 2744 & \underline{125}\\

4 & 173 & \underline{43} & 109 & \underline{14} &141&\underline{31}&343&\underline{14} & 729 & \underline{125} & 1331 &\underline{27} & 2197 & \underline{343} & 3375\\

5 & 286 & 181 & 186 & 180 &\underline{14} &241 & 343 & 512 & 729 & \underline{14} & 1331 & 1728 & 2197 & 2744 & \underline{27}\\

6 & 439 & \underline{94} & \underline{43}  & \underline{61} &300&\underline{14}& 379 &\underline{73} & \underline{31} & \underline{125} & 1331 & \underline{14} &2197 & \underline{343} & \underline{125}\\

7 & 638 & 428 & 442 & 456 &470&462 &\underline{14}&561 & 729 & 1000 & 1331 & 1728 &2197 & \underline{14} & 3375\\

8 & 889 & \underline{173} & 633 & \underline{43}&665&\underline{109}& 644 &\underline{14} & 793 & \underline{141} & 1331 & \underline{31} &2197 & \underline{343} & 3375\\

9 & 1198 & 856 & \underline{94}& 892 & 910 & \underline{61} & 896 & 896 & \underline{14} & 1081 & 1331 &\underline{73}&2197 & 2744 & \underline{125} \\

10 & 1571 & \underline{286} & 1171 & \underline{181} &\underline{43}& \underline{186} & 1190 &\underline{180} & 1206 &\underline{14}& 1431 &\underline{241}&2197 & \underline{343} &\underline{31}\\

11 & 2014 & 1508 & 1530 & 1552 & 1574 & 1596 & 1618 &1584&1575 &1580 & \underline{14} & 1849 &2197 & 2744 & 3375\\

12 & 2533 & \underline{439} & \underline{173} & \underline{94} & 2005 & \underline{43} & 2053 & \underline{61} & \underline{109} & \underline{300} & 2024 & \underline{14} & 2341& \underline{379}  & \underline{141} \\

13 & 3134 & 2432 & 2458 & 2484 & 2510 & 2536 & 2562 & 2588 & 2574 & 2530 & 2541 & 2544 &\underline{14} & 2913 & 3375  \\

14 & 3823 & \underline{638} & 3039 &\underline{428} & 3095 & \underline{442} & \underline{43} & \underline{456} & 3207 &\underline{470} & 3113 &\underline{462} & 3146 &\underline{14} & 3571\\

15 & 4606 & 3676 &\underline{286} & 3736 &\underline{94} &\underline{181} &3826 & 3856 &\underline{186} & \underline{61} & 3795 & \underline{180} & 3835 & 3836 &\underline{14}
\end{tabular}
\end{table*}
\end{center}

\begin{table*}[h!]
\caption{3-color Rado numbers $R_3(a(x+y) = bz)$}
\begin{center}
\begin{tabular}{c|cccccccccc}\label{3colorTableaxaybz}
    \diagbox{$b$}{$a$} & 1 & 2 & 3 & 4 & 5 & 6 & 7 & 8 & 9 & 10\\
    \hline
     1 & 14 & $\infty$ & $\infty$ &$\infty$ & $\infty$ & $\infty$ & $\infty$ & $\infty$ & $\infty$ & $\infty$\\
     2& 1  &\underline{14} & 243 &$\underline{\infty}$ & $\infty$ & $\underline{\infty}$ & $\infty$ & $\underline{\infty}$ & $\infty$ & $\underline{\infty}$\\
     3& 54 & 54 &\underline{14} & 384 & 2000 & $\underline{\infty}$ & $\infty$ & $\infty$ & $\underline{\infty}$ & $\infty$\\
     4& $\infty$  & \underline{1} & 108 & \underline{14} & 875 & \underline{243} & 4459 & $\underline{\infty}$ & $\infty$ & $\underline{\infty}$\\ 
     5& $\infty$ & 105 & 135 & 180 & \underline{14} & 864 & $3430$ & 3072 & 12393 & $\underline{\infty}$ \\
     6& $\infty$ & \underline{54} &\underline{1}&\underline{54} & 750 & \underline{14} & $3087$ & \underline{384} & \underline{243} & \underline{2000}\\
     7& $\infty$ & 455 & 336 & 308 & 875 & $756$ & \underline{14} & $1536$ & 8748 & 7500\\
     8& $\infty$ & \underline{$\infty$} & 432 &\underline{1} & 1000 & \underline{108} & $2744$ & \underline{14} & 8019 & \underline{875}\\
     9 & $\infty$ & $\infty$ & \underline{54} & 585 & 1125 & \underline{54} & 3087 & 1224 & \underline{14} & 6000\\
     10 & $\infty$ & $\underline{\infty}$ & 1125 & \underline{105} & \underline{1} & \underline{135} & 3430 & \underline{180} & 7290 & \underline{14}
     
\end{tabular}
\end{center}
\end{table*}

     


If the number $R_k(\E)$ is finite for a fixed $k$, we say that the equation $\E$ is \emph{$k$-regular}. If $\E$ is $k$-regular for all $k\ge 1$, we say $\E$ is \emph{regular}. In its simplest version, Rado's classical theorem gives necessary and sufficient conditions for when a linear homogeneous equation is regular \cite{RadoThesis}. 

\begin{theorem}[Rado]\label{RadoRegularityThm}
    A linear equation $\sum_{i=1}^m c_i x_i = 0$ with $c_i \in \Z$ is regular if and only if there exists a nonempty subset of the $c_i$ that sums to zero.  
\end{theorem}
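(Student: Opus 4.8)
The plan is to prove the two implications separately, since they draw on completely different tools. The routine direction is necessity (regular $\Rightarrow$ some subset of the $c_i$ sums to zero); the genuinely deep direction, and the main obstacle, is sufficiency (a zero-sum subset $\Rightarrow$ regular), which I would build on van der Waerden's theorem and its refinements.

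First I would establish necessity by proving the contrapositive: if no nonempty subset of $\{c_1,\dots,c_m\}$ sums to zero, then the equation fails to be $(p-1)$-regular for a suitable prime $p$, hence is not regular. Fix a prime $p > \sum_{i=1}^m |c_i|$ and color each positive integer by the least significant nonzero digit of its base-$p$ expansion; that is, writing $x = p^{v} r$ with $p \nmid r$, set $\chi(x) = r \bmod p \in \{1,\dots,p-1\}$, a $(p-1)$-coloring of $\N$. Suppose toward a contradiction that $x_1,\dots,x_m$ were a monochromatic solution of color $\rho$, and write $x_i = p^{v_i} r_i$ with $r_i \equiv \rho \pmod p$. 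Let $v = \min_i v_i$ and $J = \{\, i : v_i = v \,\}$, a nonempty set. Dividing $\sum_i c_i x_i = 0$ by $p^{v}$ and reducing modulo $p$ annihilates every term with $v_i > v$ and leaves $\rho \sum_{i \in J} c_i \equiv 0 \pmod p$. Since $\rho \not\equiv 0$, this gives $\sum_{i\in J} c_i \equiv 0 \pmod p$, and because $\bigl|\sum_{i\in J} c_i\bigr| \le \sum_i |c_i| < p$ it forces $\sum_{i\in J} c_i = 0$ exactly, contradicting the hypothesis. Hence $\chi$ has no monochromatic solution and the equation is not regular.

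For sufficiency, let $I$ be a nonempty subset with $\sum_{i\in I} c_i = 0$ and put $s = \sum_{i=1}^m c_i$. If $s = 0$ the constant assignment $x_1 = \cdots = x_m = 1$ is a trivially monochromatic solution, so the equation is regular. When $s \neq 0$ the idea is to locate a monochromatic solution inside a rich monochromatic structure produced by Ramsey-type theorems: using Brauer's refinement of van der Waerden's theorem one obtains, in any finite coloring, a monochromatic configuration $\{\,d,\,a,\,a+d,\dots,a+(L-1)d\,\}$, and one then attempts to place the variables indexed by $I$ at points $a + t_i d$ and the remaining variables at the common value $d$, so that $\sum_i c_i x_i = d\bigl(s + \sum_{i\in I} c_i t_i\bigr)$ and it remains to solve $\sum_{i\in I} c_i t_i = -s$ in nonnegative integers $t_i$.

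The hard part is exactly this final step: the coefficients $c_i$ with $i \in I$ may have a common divisor $g = \gcd_{i\in I}(c_i)$ that does not divide $s$, in which case no single monochromatic arithmetic progression together with its difference can carry a solution (for instance $2x-2y+3z=0$, where the only zero-sum subset is $\{2,-2\}$ with $g=2 \nmid s=3$). Overcoming this arithmetic obstruction is where the full strength of the theory enters: I would pass from plain van der Waerden progressions to Deuber's theorem on $(m,p,c)$-sets, which guarantees that every finite coloring of $\N$ contains a monochromatic $(m,p,c)$-set whose nested generators are flexible enough that the relation $\sum_{i\in I} c_i = 0$ can be used to exhibit a genuine monochromatic solution for arbitrary coefficients. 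I expect this $\gcd$/divisibility phenomenon to be the crux of the whole argument, and the precise reason the elementary progression-based construction must be upgraded to the $(m,p,c)$-set machinery.
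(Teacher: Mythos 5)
The paper does not actually prove this statement: it is quoted as Rado's classical theorem with a citation to his thesis, so there is no in-paper proof to compare against and your proposal must stand on its own. Your necessity direction does stand: the base-$p$ ``least significant nonzero digit'' coloring with $p > \sum_{i=1}^m |c_i|$, the passage to the minimum $p$-adic valuation, and the reduction modulo $p$ forcing an exact zero-sum subset is the standard argument, and every step you give is sound.

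The sufficiency direction has a genuine gap. You correctly reduce the Brauer-configuration approach to solving $\sum_{i\in I} c_i t_i = -s$ in nonnegative integers, and you correctly identify the obstruction $g = \gcd_{i\in I}(c_i) \nmid s$ (your example $2x-2y+3z=0$ is a valid witness that the naive construction fails). But your proposed repair --- ``pass to Deuber's theorem on $(m,p,c)$-sets, whose nested generators are flexible enough'' --- is an appeal to a theorem's name, not an argument. Deuber's machinery has two parts: (1) every finite coloring of $\N$ contains a monochromatic $(m,p,c)$-set, and (2) for a system satisfying the columns condition, suitable parameters $(m,p,c)$ can be chosen so that \emph{every} $(m,p,c)$-set contains a solution. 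Part (2) is exactly where the divisibility obstruction you found gets absorbed (into the parameters $p$ and $c$), and your write-up contains no trace of it; invoking Deuber without proving this step leaves the hard half of the theorem unproved. For a single equation there is also a more economical repair that your outline misses: prove, by induction on the number of colors $r$ with van der Waerden applied at each step, that for every length $l$ and every positive rational $\lambda$ each $r$-coloring of $\N$ contains a monochromatic set $\{a, a+d, \dots, a+ld\} \cup \{\lambda d\}$ with $\lambda d$ an integer; the induction works because if no admissible multiple of the difference carries the progression's color, those multiples form a dilated copy of an interval colored with at most $r-1$ colors, and one recurses inside it. With that lemma, one places the variables outside $I$ at $\lambda d$ where $\lambda$ is chosen so that $g$ divides $\lambda s$, and the congruence $\sum_{i\in I} c_i t_i = -\lambda s$ becomes solvable. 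Neither this mechanism nor any substitute appears in your proposal. (A separate, minor point: as stated with $c_i \in \Z$ arbitrary the theorem is false --- $x + 0\cdot y - 2z = 0$ has the zero-sum subset $\{0\}$ yet is not regular --- so both the statement and your argument implicitly require all $c_i \neq 0$.)
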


Nonregular equations may have finite Rado numbers for small $k$. The largest $k$ for which an equation $\E$ is $k$-regular is the \emph{degree of regularity} of $\E$, denoted $dor(\E)$. We set $dor(\E):=\infty$ if $\E$ is regular. Rado also proved a theorem that characterized the 2-regular linear homogeneous equations in three or more variables. 

\begin{theorem}[Rado]\label{Rado2RegularityThm}
Let $m\ge 3$ and $a_1,\dots,a_m \in \Z\setminus \{0\}.$ Then the equation $\sum_{i=1}^m a_i x_i = 0$ is 2-regular if and only if there exists $i$ and $j$ such that $a_i >0$ and $a_j <0$. 
\end{theorem}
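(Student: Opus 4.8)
The plan is to prove the two directions separately, with the forward (necessity) direction being immediate and the reverse (sufficiency) direction carrying essentially all of the content.

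For necessity I would argue the contrapositive. If the nonzero coefficients $a_1,\dots,a_m$ do not occur with both signs, then, after negating the whole equation (which does not change its solution set) we may assume all $a_i>0$. For any $x_1,\dots,x_m\in\N$ we then have $\sum_{i=1}^m a_i x_i \ge \sum_{i=1}^m a_i >0$, so the equation has no solution in $\N$ whatsoever. Consequently no coloring of $\N$ can admit a monochromatic solution, $R_2(\E)=\infty$, and $\E$ is not $2$-regular. This shows that $2$-regularity forces the existence of both a positive and a negative coefficient.

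For sufficiency, assume some $a_i>0$, some $a_j<0$, and $m\ge 3$. First I would reduce to a three-variable equation by \emph{grouping} variables. Let $P=\{i:a_i>0\}$ and $N=\{i:a_i<0\}$; both are nonempty and $|P|+|N|=m\ge 3$, so one of them has at least two elements. Negating the equation if necessary, assume $|P|\ge 2$, and split $P$ into two nonempty blocks $P_1,P_2$. Forcing all variables indexed by $P_1$ to a common value $u$, all those indexed by $P_2$ to a common value $v$, and all those indexed by $N$ to a common value $w$, the equation collapses to $a'u+b'v=c'w$, where $a'=\sum_{i\in P_1}a_i$, $b'=\sum_{i\in P_2}a_i$, and $c'=\sum_{i\in N}|a_i|$ are all positive integers. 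Any monochromatic solution $(u,v,w)$ of this reduced equation in $[n]$ lifts to a monochromatic solution of $\E$ in $[n]$ by assigning each original variable the common value of its block, so $R_2(\E)\le R_2(a'x+b'y=c'z)$. Hence it suffices to prove that $ax+by=cz$ is $2$-regular for all positive integers $a,b,c$.

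The heart of the argument, and the step I expect to be the main obstacle, is therefore establishing $2$-regularity of $ax+by=cz$. When $c=a+b$ the constant triple $(1,1,1)$ is a solution and the claim is trivial, so the real difficulty is the case $a+b-c\neq 0$. Here the equation is not translation invariant, and in fact not density regular, so one cannot simply pass to the larger color class, nor can one apply van der Waerden's theorem directly: searching for a solution inside a single monochromatic arithmetic progression $\{\beta+k\delta\}$ forces the relation $(a+b-c)\beta+(ai+bj-ck)\delta=0$, and the obstructing term $(a+b-c)\beta$ cannot be eliminated by any choice of progression once $a+b-c\neq 0$. The proof must instead use \emph{both} color classes genuinely; this is precisely the content of Rado's single-equation criterion specialized to two colors, and I would finish either by invoking that criterion for $ax+by=cz$ or by a direct partition-regularity argument tracking how the two color classes interact. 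Verifying this last step is where essentially all the work lies.
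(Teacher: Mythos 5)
The paper itself does not prove this statement: it is quoted as a classical theorem of Rado (stated as background, attributed to his thesis), so your proposal cannot be compared against a paper proof and must stand on its own. On its own terms, it has a genuine gap.

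What is correct: your necessity argument is complete (if all nonzero coefficients have one sign, the equation has no solutions in positive integers at all, so it is not even $1$-regular), and your grouping reduction is sound. Since $m\ge 3$, one sign class has at least two indices; collapsing the variables in the blocks $P_1,P_2,N$ to common values $u,v,w$ turns any monochromatic solution of $a'u+b'v=c'w$ into a monochromatic solution of the original equation, so $R_2(\E)\le R_2(a'x+b'y=c'z)$ and it suffices to treat $ax+by=cz$ with $a,b,c>0$.

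The gap is the final step, and it is not a loose end --- it is the entire content of the theorem. You handle only the trivial case $a+b=c$ (constant solution $(1,1,1)$) and then offer two exits for $a+b\neq c$: invoking ``Rado's single-equation criterion specialized to two colors,'' or an unspecified ``direct partition-regularity argument.'' The first is either inapplicable or circular. Rado's single-equation criterion (Theorem \ref{RadoRegularityThm} in the paper, the columns condition) characterizes \emph{full} regularity, and it fails for equations such as $x+y=3z$, which are nonetheless $2$-regular; so it cannot be invoked here. If instead ``specialized to two colors'' means the $2$-regularity characterization itself, that is precisely the theorem being proved, in exactly its essential three-variable case. The second exit is a promise, not an argument. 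What is missing is the actual construction: for instance, one can parametrize solutions as $(x,y,z)=(cs,ct,as+bt)$, observe that $(ct,ct,(a+b)t)$ is a solution for every $t$, and show that a $2$-coloring of a sufficiently long initial segment cannot avoid all monochromatic triples of this form --- a case analysis on how the two color classes distribute over multiples of $c$ and $a+b$. Carrying that out (essentially Rado's original argument) is exactly the work your proposal defers, so as written you have proved the easy direction and a standard reduction, but not the theorem.
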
 

For $3 \le k < \infty$, there is no known characterization of the $k$-regular equations. In \cite{AlexeevTsimerman} it was shown that for every $k$, there exists a linear homogeneous equation in $k+1$ variables that has degree of regularity $k$. However, for a fixed number of variables, the question of what degrees of regularity are possible for homogeneous linear equations remains largely unanswered. Rado made the following conjecture about this question in his Ph.D. thesis \cite{RadoThesis}.\\

\begin{table}[H]
\begin{minipage}[c]{0.45\textwidth}
\caption{$R_3(ax+by = z)$}
\begin{tabular}{c|cccccc}
\diagbox{$b$}{$a$} &1 & 2 &3&4&5 & 6 \\
\hline
1 & 14 & 43 & 94 & 173 & 286 & 439 \\
2 &  &$\infty$  & 1093& $\infty$   &  2975 & 4422 \\
3 &  &  & $\infty$ & $\infty$ & $\infty$ & $\infty$\\
4 &  &  &  & $\infty$ & $\infty$ & $\infty$\\
5 & & & & & $\infty$ & $\infty $ \\
6 & & & & & &  $\infty$\\
\end{tabular}
\end{minipage}
\hspace{20pt}
\begin{minipage}[c]{0.45\textwidth}
\caption{$R_3(ax+by = 2z)$}
\begin{tabular}{c|cccccc}
\diagbox{$b$}{$a$} &1 & 2 &3&4&5 & 6 \\
\hline
1 & 1 & 14  & 54  & $\infty$ & 70  & 126 \\
2 &  & \underline{14}  & 61& \underline{43}  & 181 & \underline{94} \\
3 &  &  &243  & $\infty$ & 395 &648 \\
4 &  &  &  & $\underline{\infty}$  & $\infty$  & $\underline{1093}$ \\
5 & & & & & $\infty$ & $\infty$ \\
6 & & & & & & $\underline{\infty}$ \\
\end{tabular}
\end{minipage}
\end{table}
\begin{table}[H]
\begin{minipage}[c]{0.45\textwidth}
\caption{$R_3(ax+by = 3z)$}
\begin{tabular}{c|cccccc}
\diagbox{$b$}{$a$} &1 & 2 &3&4&5 & 6 \\
\hline
1 & 54 & 1 & 27 & 54 & 89 & 195  \\
2 &  & 54 & 31 & $\infty$   & 140  & 108 \\
3 &  &  & \underline{14} & 109  & 186 & \underline{43}\\
4 &  &  &  & 384  & 220 &  $\infty$\\
5 & & & & &  2000& 1074  \\
6 & & & & & & $\underline{\infty}$ \\
\end{tabular}
\end{minipage}
\hspace{20pt}
\begin{minipage}[c]{0.45\textwidth}\caption{$R_3(ax+by = 4z)$}
\begin{tabular}{c|cccccc}
\diagbox{$b$}{$a$} &1 & 2 &3&4&5 & 6 \\
\hline
1 &$\infty$  &$\infty$  & 1  & 64 & 100 & $\infty$   \\
2 &  & \underline{1} & $\infty$ & \underline{14}   &$\infty$  & \underline{54}  \\
3 &  &  & 108 & 73  & 105 & $\infty$\\
4 &  &  &  & \underline{14}  & 180 & \underline{61} \\
5 & & & & & 141 & $\infty$   \\
6 & & & & & & \underline{31} \\
\end{tabular}
\end{minipage}
\end{table}




\begin{table}[H]
\begin{minipage}[c]{0.45\textwidth}

\caption{$R_3(ax+by = 5z)$}

\begin{tabular}{c|cccccc}
\diagbox{$b$}{$a$} &1 & 2 &3&4&5 & 6 \\
\hline
1 & $\infty$ & 45& 60&1 & 125 &150   \\
2 &  &105 &1 &$\infty$ &125 & 70 \\
3 &  & & 135&100 &125 &108\\
4 &  & & & 180 &141 & $\infty$\\
5 &  & & & &$\underline{14}$ & 300  \\
6 &  & & & & & 864 \\
\end{tabular}
\end{minipage}
\hspace{20pt}
\begin{minipage}[c]{0.45\textwidth}
\caption{$R_3(ax+by = 6z)$}

\begin{tabular}{c|cccccc}\label{TableR3_6z}
\diagbox{$b$}{$a$} &1 & 2 &3&4&5 & 6 \\
\hline
1 & $\infty$ & 40 &81&$\infty$&1&216 \\
2 &  &\underline{54}&81&\underline{1}&90&\underline{27}\\
3 &  & &\underline{1}&$\infty$&135&\underline{14}\\
4 &  & & &\underline{54}&$\infty$&\underline{31}\\
5 &  & & & &750&241\\
6 &  & & & & &\underline{14}\\
\end{tabular}
\end{minipage}
\end{table}

\begin{conjecture}[Rado's Boundedness Conjecture]
    For all $m\ge1$, there is a number $\Delta(m)$ such that if a linear equation in $m$ variables is $\Delta(m)$-regular, then it is regular. 
\end{conjecture}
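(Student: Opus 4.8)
This statement is \emph{Rado's Boundedness Conjecture}, a longstanding open problem; what follows is therefore a proposed line of attack rather than a complete argument. The plan is to prove the logically equivalent \emph{contrapositive}: for each fixed $m$, the degree of regularity of the \emph{non-regular} equations in $m$ variables should be bounded by a constant $B(m)$ depending only on $m$. Indeed, by Theorem \ref{RadoRegularityThm} an equation $\sum_{i=1}^m c_i x_i = 0$ fails to be regular exactly when no nonempty subset of its coefficients sums to zero, and since $R_k(\E) \le R_{k+1}(\E)$ (a $k$-coloring is a $(k+1)$-coloring leaving one color unused), the degree of regularity of such an equation is well defined and finite. If one shows $dor(\E) \le B(m)$ for every non-regular $m$-variable $\E$, then $\Delta(m) := B(m)+1$ witnesses the conjecture, and conversely any valid $\Delta(m)$ yields such a bound. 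Thus the entire content of the conjecture is the \emph{uniformity} of this bound across all coefficient vectors, however large.

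First I would dispose of the trivial cases and fix notation: if all $c_i$ share a sign the equation has no solution in $\N$ and $dor(\E)=0$, so I may assume both signs occur and write the equation as $\sum_{i\in P} a_i x_i = \sum_{j\in Q} b_j y_j$ with $a_i,b_j>0$; by Theorem \ref{Rado2RegularityThm} each such equation with $m\ge 3$ is already $2$-regular, so the only question is how high the regularity can climb. The core step is a coloring construction. The classical bound comes from the \emph{last-nonzero-digit} coloring: fix a prime $p$, write $n=p^{v}u$ with $p\nmid u$, and color $n$ by $u \bmod p$, a $(p-1)$-coloring. In any monochromatic solution, reducing modulo $p^{v+1}$ where $v$ is the least $p$-adic valuation among the variables forces $\sum_{i\in S} c_i \equiv 0 \pmod p$, where $S\neq\varnothing$ is the set of minimal-valuation variables; choosing $p>\sum_i|c_i|$ upgrades this to the genuine relation $\sum_{i\in S}c_i=0$, contradicting non-regularity. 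This reproves Theorem \ref{RadoRegularityThm} and gives $dor(\E) \le \sum_i|c_i|-1$, but the number of colors depends on the \emph{sizes} of the coefficients, not on $m$.

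The whole program, then, is to replace this by a coloring that uses only $f(m)$ colors. One must detect the vanishing of one of the at most $2^m$ subset sums $\sum_{i\in S}c_i$ while being immune to the ``false positives'' created when a nonzero subset sum happens to be divisible by a small modulus. A promising route is to combine several small moduli, or valuations at several primes simultaneously, into a single coloring that separates the finitely many nonzero subset sums from $0$ using a color budget controlled only by the number of subsets, i.e.\ by $m$. The hard part will be precisely this uniformity: the obstruction is number-theoretic, since for any fixed finite family of congruence-based colorings one can choose a coefficient vector whose nonzero subset sums conspire to be divisible by all the moduli in play, defeating the coloring. Overcoming this appears to require a genuinely new structural dichotomy --- showing that vectors with ``spread-out'' subset sums admit a bounded coloring, whereas vectors forcing many coincidental divisibilities must themselves contain a vanishing subset sum, and hence be regular.

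Finally, I would use two further inputs. As calibration, the construction of \cite{AlexeevTsimerman} yields, for every $k$, a $(k{+}1)$-variable equation of degree of regularity exactly $k$; taking $m=k+1$ shows that any valid threshold must satisfy $\Delta(m)\ge m$, so the sought bound necessarily grows and cannot be an absolute constant. On the computational side central to this paper, the SAT encodings of Section \ref{SATEncoding} let one compute $R_k(\E)$ for increasing $k$ and thereby pin down $dor(\E)$ for individual non-regular equations; this supplies rigorous lower bounds on $\Delta(m)$ for small $m$ and can guide a precise conjecture for its growth rate, which is exactly the kind of evidence needed to direct the structural argument above.
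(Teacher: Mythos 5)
The statement you were assigned is not a theorem of this paper at all: it is Rado's Boundedness Conjecture, which the authors state purely as background and which remains open. The paper contains no proof of it; the only progress it records is due to Fox and Kleitman \cite{FoxKleitman}, namely that it suffices to prove the conjecture for homogeneous equations and that for $m=3$ every $24$-regular linear homogeneous equation is regular. So your framing — a program rather than a proof — is the only honest one, and your logical scaffolding is correct: the equivalence of the conjecture with a uniform bound $B(m)$ on $dor(\E)$ over non-regular $m$-variable equations, the finiteness of $dor(\E)$ for non-regular $\E$ via monotonicity of $k$-regularity, the reduction (using Theorem \ref{Rado2RegularityThm}) to the question of how high the regularity of a non-regular equation can climb, and the calibration $\Delta(m)\ge m$ from the construction of \cite{AlexeevTsimerman} are all sound observations.

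The genuine gap is that the entire mathematical core is missing, as you yourself concede: the ``structural dichotomy'' that would replace the classical valuation coloring (whose color count $p-1$ is governed by the least prime $p>\sum_i|c_i|$, hence by the \emph{sizes} of the coefficients) with a coloring using only $f(m)$ colors is not constructed, and no candidate mechanism for it is given beyond the observation that it must defeat coefficient vectors engineered to have many coincidental divisibilities. This is precisely where the problem has been stuck since Rado; the paper's own coloring lemmas in Section \ref{ColoringLemmas} (Lemmas \ref{LemmaDistinctPrimesModK}, \ref{LemmaUniquePrimes}, and \ref{LemmaTwoPrimes}), like those of \cite{FoxKleitman}, are exactly attempts of the kind you describe — combining $p$-adic valuations at chosen primes into few-color colorings — and they illustrate the obstruction: they bound $dor(\E)$ only for particular coefficient patterns (distinct valuations, special behavior of $a+b$ or $b+c$), never uniformly over all non-regular equations in $m$ variables. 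Two smaller inaccuracies: first, your last-nonzero-digit argument proves only one direction of Theorem \ref{RadoRegularityThm} (regularity forces a vanishing subset sum); the converse, which is the substantial half of Rado's theorem, is untouched by it. Second, the stated bound $dor(\E)\le\sum_i|c_i|-1$ needs a prime immediately above $\sum_i|c_i|$; what the argument actually yields is $dor(\E)\le p-2$ for the least prime $p>\sum_i|c_i|$, which via Bertrand's postulate is only $O\bigl(\sum_i|c_i|\bigr)$. Neither quibble matters for the program, but the program itself, as written, is a restatement of the difficulty rather than a route past it.
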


In \cite{FoxKleitman} it was shown that Rado's boundedness conjecture is true if it is true for the case of homogeneous equations. Those authors also proved the first nontrivial case of the conjecture by showing that if a linear homogeneous equation in three variables is 24-regular, then it is regular. However, it is not known if 24 is the best possible bound. There are no known examples of a nonregular linear homogeneous equation in three variables that is 4-regular. Moreover, in \cite{FoxKleitman}, several coloring lemmas give more precise bounds on the degree of regularity of 3-variable linear homogeneous equations. We contribute further improvements on their results and compute the degree of regularity of all sufficiently small equations $ax+by = cz$.

\begin{theorem}\label{TheoremDORComputations}
The degree of regularity of the equation $ax+by = cz$ is known for all $1\le a,b,c \le 5$.
\end{theorem}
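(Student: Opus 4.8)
The plan is to reduce the statement to a finite classification and then settle each equation by combining Rado's regularity theorems with a two-sided attack on the degree of regularity. First I would normalize the problem: since $ax+by=cz$ and its scaling by $1/\gcd(a,b,c)$ have identical solution sets, it suffices to treat primitive triples, and the symmetry $a \leftrightarrow b$ further halves the work. For each of the finitely many remaining triples I would apply Theorem \ref{RadoRegularityThm}: writing the equation as $ax+by-cz=0$, the only nonempty subsets of $\{a,b,-c\}$ that can sum to zero force $c=a$, $c=b$, or $c=a+b$, so exactly these cases are regular and have $dor(\E)=\infty$. Every remaining equation has both a positive and a negative coefficient, so by Theorem \ref{Rado2RegularityThm} it is $2$-regular, giving $dor(\E)\ge 2$.

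For each nonregular triple the task is then to pin down the finite value $dor(\E)=\max\{k : R_k(\E)<\infty\}$, using the fact that this set of $k$ is downward closed, since an avoiding $k$-coloring of $\N$ is also an avoiding $(k+1)$-coloring with one color unused. I would establish the lower bound $dor(\E)\ge k$ by certifying $k$-regularity: for a suitable $n$, the propositional formula asserting that $[n]$ admits a $k$-coloring with no monochromatic solution is unsatisfiable, and the resulting UNSAT certificate is a finite, machine-checkable witness that $R_k(\E)<\infty$. This is the encoding of Section \ref{SATEncoding}; I would increase the number of colors $k$ and, for each $k$, search for an $n$ making the avoiding formula unsatisfiable, taking the largest such $k$ as the candidate value of $dor(\E)$.

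The matching upper bound $dor(\E)\le k$ is the genuinely hard part, because it requires exhibiting a $(k+1)$-coloring of all of $\N$ with no monochromatic solution, an infinite object that no SAT solver can certify directly. Here I would use the satisfying assignments produced on large initial segments $[n]$ as a guide, searching for structure (periodicity, or a coloring determined by residues or by the exponents appearing in the prime factorization) that extrapolates to a coloring of $\N$, and then verify by a direct hand argument that the extrapolated coloring avoids monochromatic solutions for all integers. The coloring lemmas of Fox and Kleitman \cite{FoxKleitman} provide both a template for such constructions and the sharpest existing bounds on the degree of regularity of three-variable equations. I expect that refining these lemmas, rather than the finite SAT computations, will be the main obstacle and the place where new arguments are required.
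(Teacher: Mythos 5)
Your proposal is correct and takes essentially the same approach as the paper: Rado's two theorems settle the regular cases and give $dor(\E)\ge 2$, SAT unsatisfiability certificates supply the lower bounds ($k$-regularity), and hand-verified infinite colorings refining the Fox--Kleitman lemmas give the matching upper bounds. The part you flag as the main obstacle --- extrapolating avoiding colorings of $\N$ and proving them correct --- is exactly what the paper's Section~\ref{ColoringLemmas} provides via its valuation-based and product-coloring lemmas, which are then cited case by case in the paper's proof.
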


We also mention a related conjecture of Golowich \cite{GolowichRegularity}.
\begin{conjecture}\label{ConjectureGolowich}
        For each positive integer $k$ there is an integer $m(k)$ such that for any $m \geq m(k)$, any linear homogeneous equation in $m$ variables with nonzero integer coefficients not all of the same sign is $k-$regular.
    \end{conjecture}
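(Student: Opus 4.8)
The plan is to establish the conjecture by showing that, for fixed $k$ and all large $m$, every $k$-coloring of a long enough interval $[n]$ is forced to contain a monochromatic solution of $\sum_{i=1}^m a_i x_i = 0$ (with the $a_i$ nonzero and not all of one sign). By Rado's theorem (Theorem \ref{RadoRegularityThm}) we may assume the equation is \emph{nonregular}, i.e. no nonempty subset of the $a_i$ sums to zero; in particular $\Sigma:=\sum_{i=1}^m a_i \neq 0$, since otherwise the full index set would be a zero-sum subset. The first reduction is to seek a solution all of whose coordinates lie in a single color class, and, exploiting that $m$ is large, to restrict attention to solutions that use only a bounded number of distinct values. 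Thus I look for a monochromatic configuration of values and an assignment $x_i \mapsto v(i)$ of each of the many variables to one of those values, turning the equation into a single linear relation among the values weighted by subset sums of the coefficients.

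For the monochromatic input I would invoke van der Waerden's theorem: for $n$ large in terms of $k$ and a length $L$, any $k$-coloring of $[n]$ contains a monochromatic arithmetic progression $\{a, a+d, \dots, a+(L-1)d\}$. Writing $x_i = a + c_i d$ with $c_i \in \{0,1,\dots,L-1\}$ gives
\[ \sum_{i=1}^m a_i x_i \;=\; a\,\Sigma \;+\; d\sum_{i=1}^m a_i c_i . \]
A monochromatic solution therefore exists precisely when the integers $c_i$ can be chosen so that $\sum_{i=1}^m a_i c_i = -a\Sigma/d$. The mixed-sign hypothesis is what makes this plausible: since some $a_i$ are positive and some negative, the achievable values of $\sum_i a_i c_i$ form a set that extends in both directions around $0$, and with $m$ large this set becomes arithmetically rich.

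The core of the argument I would aim to isolate is a subset-sum / coin-problem lemma: once $m$ is large, the set $\{\sum_{i=1}^m a_i c_i : 0 \le c_i \le L-1\}$ contains every integer in a long symmetric interval around $0$ that is divisible by $\gcd(a_1,\dots,a_m)$. One proves this by pigeonhole on partial sums (an Erd\H{o}s--Ginzburg--Ziv / Davenport-constant style manufacture of coefficient subsets with prescribed residues, then prescribed sizes), using the many available coefficients to realize each required total. The number $m(k)$ would then be assembled from the van der Waerden number for $(k,L)$ together with the length $L$ needed to make the target $-a\Sigma/d$ fall inside the reachable interval.

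The hard part, and the reason this remains a conjecture, is that the coefficients $a_i$ are \emph{arbitrary} nonzero integers while the number of colors $k$ is \emph{fixed}, and van der Waerden's theorem gives no control over the ratio $a/d$ of the monochromatic progression it produces. An adversarial coloring can force $a$ to be enormous relative to $d$, making the target $|{-a\Sigma/d}|$ far larger than the reachable range $(L-1)\sum_i |a_i|$ for any $L$ that a $k$-coloring is compelled to supply; and the divisibility requirement $d \mid a\Sigma$ can likewise fail. These failures are not accidental: they are exactly the inability to absorb the inhomogeneous term $a\Sigma$, which is impossible to cancel precisely because no nonempty subset of the coefficients sums to zero. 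My plan therefore yields a clean bound $m(k)$ only when the coefficients are bounded (or more generally when their subset sums are not too sparse and the relevant scales are controlled); handling large, pairwise-coprime coefficients against a fixed palette of colors requires replacing the rigid ``values in one arithmetic progression'' template with a genuinely multiscale monochromatic configuration, and constructing such configurations is the obstruction that keeps the conjecture open.
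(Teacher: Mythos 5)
Your proposal sets out to \emph{prove} the statement, but the statement is false, and the paper's role for it is precisely the opposite: Conjecture \ref{ConjectureGolowich} is recorded only to be refuted. Theorem \ref{TheoremDORUpperBoundAllm} exhibits, for every $m,k\ge 3$, the equation $x_1+\dots+x_{m-1}=\lceil (m-1)^{\frac{k-1}{k-2}}\rceil x_m$, which has nonzero coefficients not all of the same sign and yet is not $k$-regular. The disproof is a one-line application of Lemma \ref{LemmaAlgebraicConditions}(ii): with $a_1=\dots=a_{m-1}=1$, $a_m=\lceil (m-1)^{\frac{k-1}{k-2}}\rceil$, and $S=m-1$, one checks $S\le a_1^{1/(k-1)}a_m^{1-1/(k-1)}$, so the multiplicative-scale coloring $\chi(n)=\lceil\log_d n\rceil \bmod k$ with $d=(a_1/a_m)^{1/(k-1)}$ admits no monochromatic solution: any solution forces $x_m$ into a scale band strictly between $1$ and $k-1$ bands away from that of $M=\max\{x_1,\dots,x_{m-1}\}$, for \emph{all} of $\N$ at once. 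No choice of $m(k)$ can help, because the counterexample coefficient $a_m$ is allowed to grow with $m$.

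To your credit, your final paragraph diagnoses the failure mode almost exactly: you observe that an adversarial coloring can make the inhomogeneous term unabsorbable when one coefficient is enormous relative to the others, and that your van der Waerden/subset-sum scheme only survives when coefficients are bounded. The gap is that you treat this obstruction as evidence the conjecture is ``open'' and hard, rather than pushing it one step further to a refutation: the colorings that defeat your AP-based template are not merely adversarial for your method, they defeat \emph{every} method, because logarithmic-band colorings are legal $k$-colorings of $\N$. Concretely, your reachable range $(L-1)\sum_i|a_i|$ versus target $|a\Sigma/d|$ tension becomes, in the paper's hands, the inequality $S\le a_1^{1/(k-1)}a_m^{1-1/(k-1)}$, which can be satisfied for any fixed $k$ and any $m$ by choosing $a_m$ large enough. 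So the missing idea is not a stronger monochromatic configuration (no such configuration exists here); it is the realization that the conjecture should be attacked by constructing a coloring, and that multiplicative colorings keyed to the coefficient ratio do the job uniformly in $n$. As a corollary the paper even pins down $dor(x_1+\dots+x_{m-1}=(m-1)^2x_m)=2$ (Corollary \ref{CorollaryDOR2Allm}), which your framework, aimed at establishing $k$-regularity for all large $m$, cannot recover.
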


We provide counterexamples to the Golowich conjecture.
We show that for all $m,k \ge 3$ there is a linear homogeneous equation in $m$ variables that is not $k$-regular. 

\begin{theorem}\label{TheoremDORUpperBoundAllm}
For all $m,k \ge 3$, there is a linear homogeneous equation $\E$ in $m$ variables that is not $k$-regular. In particular, $$x_1+\dots+x_{m-1}= \lceil(m-1)^\frac{k-1}{k-2}\rceil x_m$$ is not $k$-regular. Thus Conjecture \ref{ConjectureGolowich} is false.
\end{theorem}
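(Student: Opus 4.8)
The plan is to exhibit, for each fixed $k\ge 3$ and each $m\ge 3$, an explicit $k$-coloring of $\N$ admitting no monochromatic solution to the displayed equation. Writing $s=m-1$ and $c=\lceil s^{(k-1)/(k-2)}\rceil$, so the equation is $x_1+\dots+x_s=cx_m$, I would use a logarithmic interval coloring: fix a base $\rho>1$ and set $\chi(x)=\lfloor \log_\rho x\rfloor \bmod k$, so that the color classes are the geometric intervals $[\rho^j,\rho^{j+1})$ taken periodically modulo $k$. (Note $c\ge 2$ for $m\ge 3$, so any choice $\rho=c^{1/(k-1)}>1$ is legitimate and $\log_\rho$ is increasing.)

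The key structural step is to control, in a hypothetical monochromatic solution, the quantity $M=\max(x_1,\dots,x_s)$. Since the $s$ left-hand variables are positive integers summing to $cx_m$, the maximum is at least the average, $M\ge \tfrac{c}{s}x_m$; and since the remaining $s-1\ge 1$ variables are each at least $1$, we get $M\le cx_m-(s-1)<cx_m$. Hence the forced ratio $M/x_m$ lies in $[c/s,\,c)$. The role of the coloring is to make this forced ratio push $M$ into an interval whose color differs from that of $x_m$.

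I would then choose $\rho=c^{1/(k-1)}$. Writing $j_0=\lfloor \log_\rho x_m\rfloor$, so $\rho^{j_0}\le x_m<\rho^{j_0+1}$, the lower bound gives $M\ge \tfrac{c}{s}\rho^{j_0}\ge \rho^{j_0+1}$ and the upper bound gives $M<c\,\rho^{j_0+1}=\rho^{j_0+k}$, whence $\lfloor \log_\rho M\rfloor \in\{j_0+1,\dots,j_0+k-1\}$. These are $k-1$ consecutive integers, none congruent to $j_0$ modulo $k$, so $\chi(M)\ne \chi(x_m)$, contradicting monochromaticity since $M$ is one of the $x_p$. The two inequalities invoked, $c/s\ge \rho$ and $c\le \rho^{k-1}$, reduce respectively to $c\ge s^{(k-1)/(k-2)}$ and the identity $c\le c$; the first is exactly what the ceiling in the definition of $c$ guarantees, and this is precisely where the exponent $(k-1)/(k-2)$ originates.

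To finish, I would observe that the rearranged equation $x_1+\dots+x_s-cx_m=0$ has coefficients $(1,\dots,1,-c)$, which are nonzero and not all of the same sign, yet the construction shows it is not $k$-regular for \emph{every} $m\ge 3$ with $k$ fixed. This contradicts Conjecture \ref{ConjectureGolowich}, which would force all such equations to be $k$-regular once $m\ge m(k)$. The main obstacle is not the verification, which is routine once the setup is in place, but isolating the construction: finding a single base $\rho$ for which the averaging lower bound and the trivial upper bound on $M$ straddle exactly $k-1$ consecutive geometric scales, together with the observation that such a $\rho$ exists precisely when $c\ge s^{(k-1)/(k-2)}$.
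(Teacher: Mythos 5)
Your proof is correct and takes essentially the same route as the paper: the paper deduces the theorem directly from Lemma \ref{LemmaAlgebraicConditions}(ii) by checking $m-1 \le \lceil (m-1)^{\frac{k-1}{k-2}}\rceil^{\frac{k-2}{k-1}}$, and your argument is exactly the proof of that lemma inlined for this equation. Your coloring $\lfloor \log_\rho x\rfloor \bmod k$ with $\rho = c^{1/(k-1)}>1$ induces the same geometric color classes as the paper's $\lceil \log_d x\rceil \bmod k$ with $d = c^{-1/(k-1)}<1$, and the two inequalities you use ($M \ge \tfrac{c}{s}x_m$ from averaging and $M < cx_m$ from positivity) are the same ones appearing there.
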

As a corollary, for $m\ge 3$ there exists a linear homogeneous equation in $m$ variables with degree of regularity exactly 2. 

\begin{corollary}\label{CorollaryDOR2Allm}
For all $m \ge 3$, there is a linear homogeneous equation $\E$ in $m$ variables with $dor(\E) = 2$. In particular, $$dor(x_1+\dots+x_{m-1}= (m-1)^2x_m) = 2.$$
\end{corollary}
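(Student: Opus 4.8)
The plan is to establish the two bounds $dor(\E) \ge 2$ and $dor(\E) \le 2$ separately, where $\E$ denotes the equation $x_1 + \dots + x_{m-1} = (m-1)^2 x_m$. Writing $\E$ in homogeneous form as $x_1 + \dots + x_{m-1} - (m-1)^2 x_m = 0$, its coefficient vector is $(1,\dots,1,-(m-1)^2)$, which contains $m-1$ positive entries and one negative entry (the latter is nonzero since $m \ge 3$).

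For the lower bound I would simply invoke Rado's 2-regularity criterion, Theorem \ref{Rado2RegularityThm}: since $m \ge 3$ and the coefficients are nonzero with both a positive and a negative entry, $\E$ is 2-regular, and hence $dor(\E) \ge 2$.

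For the upper bound, the key observation is that $\E$ is precisely the equation produced by Theorem \ref{TheoremDORUpperBoundAllm} when $k = 3$. Setting $k=3$ gives the exponent $\tfrac{k-1}{k-2} = \tfrac{2}{1} = 2$, so that $\lceil (m-1)^{(k-1)/(k-2)}\rceil = \lceil (m-1)^2\rceil = (m-1)^2$, the ceiling being vacuous because $(m-1)^2$ is already an integer. Theorem \ref{TheoremDORUpperBoundAllm}, which applies since $m \ge 3$ and $k = 3 \ge 3$, then asserts exactly that $\E$ is not 3-regular, yielding $dor(\E) \le 2$.

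Combining the two bounds gives $dor(\E) = 2$, completing the argument. The only point requiring any verification is that the ceiling in Theorem \ref{TheoremDORUpperBoundAllm} collapses to $(m-1)^2$ at $k=3$; beyond this trivial check, every step is a direct appeal to an already-established theorem, so I do not expect a genuine obstacle. In other words, the corollary is essentially the specialization of Theorem \ref{TheoremDORUpperBoundAllm} to $k=3$ paired with the classical 2-regularity guarantee of Theorem \ref{Rado2RegularityThm}.
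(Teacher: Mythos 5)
Your proposal is correct and matches the paper's own argument exactly: the paper likewise obtains the lower bound $dor(\E) \ge 2$ from Theorem \ref{Rado2RegularityThm} and the upper bound $dor(\E) \le 2$ by setting $k=3$ in Theorem \ref{TheoremDORUpperBoundAllm}, where the ceiling collapses to $(m-1)^2$. You have merely spelled out the details that the paper leaves implicit.
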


This paper is organized as follows: in Section \ref{SATEncoding} we give background on SAT solving and the computational methods we used to compute Rado numbers. Section \ref{ColoringLemmas} gives several lemmas used to obtain improved bounds on the degree of regularity of certain families of equations. In Section \ref{TwoParamRadoNumbers} we introduce a new SAT method to compute families of Rado numbers and prove our main results. Section \ref{SectionRadoCNFFileGeneration} gives additional details on the computational aspects of this project. The Appendix contains additional tables of Rado numbers and experimental data.

\section{SAT Solving and Encoding}\label{SATEncoding}
In this section we explain how to encode the problem of finding Rado numbers as an instance of SAT and describe additional techniques used to increase performance. The code used for our computations can be found at \cite{Ramsey_Research_Software}. 
\subsection{Background on Satisfiability}
A \emph{literal} is a Boolean variable or its negation. A \emph{clause} is a logical disjunction of literals, and a Boolean formula is in \emph{conjunctive normal form} (CNF) if it is a logical conjunction of clauses. 

The Boolean satisfiability problem (SAT) is the problem of determining whether a given Boolean formula is satisfiable, i.e., there is a true/false assignment to the literals that makes the formula true. Any Boolean formula can be expressed in conjunctive normal form, and most SAT solvers take CNF formulas as input.   

\subsubsection{Encoding of the Problem}
The problem of computing Rado numbers can be encoded as an instance of SAT. Given an equation $\E$ and positive integers $n,k$, we construct a formula $F_n^k(\E)$ that is satisfiable if and only if there exists a $k$-coloring of $[n]$ that does not contain a monochromatic solution to $\E$. Therefore if $F_n^k(\E)$ is satisfiable, then $R_k(\E) > n$, and otherwise $R_k(\E) \le n$. We use the variables $v_{j}^i$ that are assigned the value true if and only if the integer $j$ is colored with color $i$. Following the language used in \cite{SchurFive}, a formula $F_n^k(\E)$ consists of three different types of clauses: \emph{positive, negative,} and \emph{optional}. 
    \begin{itemize}
     
        \item Positive clauses encode that every number $j$ is assigned at least one color, and are of the form $v_j^1 \vee v_j^2 \vee \dots \vee v_j^k$ for $1\le j \le n$.
        \item Negative clauses encode that there are no monochromatic solutions to $\E$. If $(x_1, x_2,\dots, x_m)$ is a solution to $\E$, then its corresponding negative clauses are $\bar{v}_{x_1}^i \vee \dots \vee \bar{v}_{x_m}^i$ for $1\le i \le k$. Every positive integer solution $x$ to $\E$ with $\|x\|_{\infty} \le n$ contributes these $k$ negative clauses to $F_n^k(\E)$. 
        \item Optional clauses encode that every number $j$ is assigned at most one color, and are of the form $\bar{v}_j^{i_1} \vee \bar{v}_j^{i_2}$ for $1\le j \le n$ and $1\le i_1 < i_2 \le k$. These clauses are not strictly necessary since they do not affect the satisfiability of $F_n^k(\E)$, but they ensure that satisfying assignments are in one-to-one correspondence with $k$-colorings of $[n]$ that avoid monochromatic solutions to $\E$.  
    \end{itemize}


\begin{example}
The clauses in the formula $F^3_4(x+y=z)$ are: 

Positive clauses:
\begin{align*}
    (\textcolor{red}{v^1_1} \vee \textcolor{blue}{v^2_1} \vee \textcolor{brown}{v^3_1}) \wedge (\textcolor{red}{v^1_2} \vee \textcolor{blue}{v^2_2} \vee \textcolor{brown}{v^3_2}) \wedge (\textcolor{red}{v^1_3} \vee \textcolor{blue}{v^2_3} \vee \textcolor{brown}{v^3_3}) \wedge (\textcolor{red}{v^1_4} \vee \textcolor{blue}{v^2_4} \vee \textcolor{brown}{v^3_4})
\end{align*}

Negative clauses:
\begin{align*}
&(\textcolor{red}{\overline{v}^1_1} \vee \textcolor{red}{\overline{v}^1_1} \vee \textcolor{red}{\overline{v}^1_2}) \wedge (\textcolor{red}{\overline{v}^1_2} \vee \textcolor{red}{\overline{v}^1_1} \vee \textcolor{red}{\overline{v}^1_3}) \wedge (\textcolor{red}{\overline{v}^1_3} \vee \textcolor{red}{\overline{v}^1_1} \vee \textcolor{red}{\overline{v}^1_4}) \wedge \\
&(\textcolor{red}{\overline{v}^1_1} \vee \textcolor{red}{\overline{v}^1_2} \vee \textcolor{red}{\overline{v}^1_3}) \wedge (\textcolor{red}{\overline{v}^1_2} \vee \textcolor{red}{\overline{v}^1_2} \vee \textcolor{red}{\overline{v}^1_4}) \wedge (\textcolor{red}{\overline{v}^1_1} \vee \textcolor{red}{\overline{v}^1_3} \vee \textcolor{red}{\overline{v}^1_4}) \wedge \\
&(\textcolor{blue}{\overline{v}^2_1} \vee \textcolor{blue}{\overline{v}^2_1} \vee \textcolor{blue}{\overline{v}^2_2}) \wedge (\textcolor{blue}{\overline{v}^2_2} \vee \textcolor{blue}{\overline{v}^2_1} \vee \textcolor{blue}{\overline{v}^2_3}) \wedge (\textcolor{blue}{\overline{v}^2_3} \vee \textcolor{blue}{\overline{v}^2_1} \vee \textcolor{blue}{\overline{v}^2_4}) \wedge \\
&(\textcolor{blue}{\overline{v}^2_1} \vee \textcolor{blue}{\overline{v}^2_2} \vee \textcolor{blue}{\overline{v}^2_3}) \wedge (\textcolor{blue}{\overline{v}^2_2} \vee \textcolor{blue}{\overline{v}^2_2} \vee \textcolor{blue}{\overline{v}^2_4}) \wedge (\textcolor{blue}{\overline{v}^2_1} \vee \textcolor{blue}{\overline{v}^2_3} \vee \textcolor{blue}{\overline{v}^2_4})\wedge \\
&(\textcolor{brown}{\overline{v}^3_1} \vee \textcolor{brown}{\overline{v}^3_1} \vee \textcolor{brown}{\overline{v}^3_2}) \wedge (\textcolor{brown}{\overline{v}^3_2} \vee \textcolor{brown}{\overline{v}^3_1} \vee \textcolor{brown}{\overline{v}^3_3}) \wedge (\textcolor{brown}{\overline{v}^3_3} \vee \textcolor{brown}{\overline{v}^3_1} \vee \textcolor{brown}{\overline{v}^3_4}) \wedge \\
&(\textcolor{brown}{\overline{v}^3_1} \vee \textcolor{brown}{\overline{v}^3_2} \vee \textcolor{brown}{\overline{v}^3_3}) \wedge (\textcolor{brown}{\overline{v}^3_2} \vee \textcolor{brown}{\overline{v}^3_2} \vee \textcolor{brown}{\overline{v}^3_4}) \wedge (\textcolor{brown}{\overline{v}^3_1} \vee \textcolor{brown}{\overline{v}^3_3} \vee \textcolor{brown}{\overline{v}^3_4})
\end{align*}

Optional clauses:
\begin{align*}
    &(\textcolor{red}{\overline{v}^1_1} \vee \textcolor{blue}{\overline{v}^2_1}) \wedge (\textcolor{red}{\overline{v}^1_1} \vee \textcolor{brown}{\overline{v}^3_1}) \wedge (\textcolor{blue}{\overline{v}^2_1} \vee \textcolor{brown}{\overline{v}^3_1}) \wedge
    (\textcolor{red}{\overline{v}^1_2} \vee \textcolor{blue}{\overline{v}^2_2}) \wedge (\textcolor{red}{\overline{v}^1_2} \vee \textcolor{brown}{\overline{v}^3_2}) \wedge (\textcolor{blue}{\overline{v}^2_2} \vee \textcolor{brown}{\overline{v}^3_2}) \wedge \\
    &(\textcolor{red}{\overline{v}^1_3} \vee \textcolor{blue}{\overline{v}^2_3}) \wedge (\textcolor{red}{\overline{v}^1_3} \vee \textcolor{brown}{\overline{v}^3_3}) \wedge (\textcolor{blue}{\overline{v}^2_3} \vee \textcolor{brown}{\overline{v}^3_3}) \wedge (\textcolor{red}{\overline{v}^1_4} \vee \textcolor{blue}{\overline{v}^2_4}) \wedge (\textcolor{red}{\overline{v}^1_4} \vee \textcolor{brown}{\overline{v}^3_4}) \wedge (\textcolor{blue}{\overline{v}^2_4} \vee \textcolor{brown}{\overline{v}^3_4})
\end{align*}
\end{example}

If we input $F_4^3(x+y=z)$ into a SAT solver, it will output satisfiable. The $3-$coloring  $\textcolor{red}{1},\textcolor{blue}{2}, \textcolor{brown}{3,4}$, for example, avoids monochromatic solutions. We remark that even though some clauses, such as  ${\textcolor{red}{\overline{v}^1_1} \vee \textcolor{red}{\overline{v}^1_1} \vee \textcolor{red}{\overline{v}^1_2}}$, contain redundant literals, these literals are removed in a preprocessing step. 

We will use this SAT encoding to prove Theorem \ref{MainRadoNumberComputations}. In Section \ref{SectionRadoCNFFileGeneration} we give practical details on this encoding and how to generate formulas efficiently.

\section{Degree of regularity coloring lemmas}\label{ColoringLemmas}

Working towards the goal of computing the degree of regularity and Rado number for as many equations as possible, in this section we collect several results on colorings that avoid monochromatic solutions. These colorings give upper bounds on the degree of regularity of certain equations, and this allows us to avoid doing unnecessary computations. We are especially interested in cases where we can show that the degree of regularity of an equation is at most three. In these cases a computation of a (finite) 3-color Rado number is a proof that the degree of regularity equals three. 

The following result gives two algebraic conditions that guarantee an upper bound on the degree of regularity. A version of the first condition was proved in \cite{FoxKleitman}. 



\begin{lemma}\label{LemmaAlgebraicConditions}
Let $\E$ be the equation $a_1x_1+\dots+a_{m-1}x_{m-1} = a_m x_m$ with $a_1\le a_2 \le \dots \le a_{m-1}$ and $a_i >0$ for all $i$. Let $S:= \sum_{i=1}^{m-1} a_i$. Then $\E$ is not $k$-regular if one of the following conditions holds:
\begin{enumerate}[(i)]
    \item $S\le \frac{a_1^{k-1}}{a_m^{k-2}}$, 
     \hspace{40pt} (ii)\hspace{4pt}$S \le a_1^{\frac{1}{k-1}}a_m^{1-\frac{1}{k-1}}$.
\end{enumerate}

\end{lemma}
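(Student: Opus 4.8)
The plan is to produce a single explicit $k$-coloring of $\N$ that avoids monochromatic solutions, namely the logarithmic coloring $\chi(n)=\lfloor\log_\lambda n\rfloor \bmod k$ for a base $\lambda>1$ to be chosen from the hypotheses. Writing $x_{\max}=\max_{1\le i\le m-1}x_i$, the central observation is that it is enough to guarantee $\chi(x_m)\neq\chi(x_{\max})$ for \emph{every} positive integer solution of $\E$: since $x_m$ and a variable attaining $x_{\max}$ occur in every solution, a monochromatic solution would force these two to share a color, so separating them rules out all monochromatic solutions at once. This reduces an $m$-variable problem to a two-variable comparison.

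First I would establish the key size estimate that traps $x_m$ relative to $x_{\max}$ independently of how the remaining variables are spread. From $a_mx_m=\sum_{i=1}^{m-1}a_ix_i$, discarding all summands except the one containing $x_{\max}$ and using $a_1\le a_i$ gives the lower bound, while $x_i\le x_{\max}$ and $\sum_i a_i=S$ give the upper bound, so that
\[
\frac{a_1}{a_m}\,x_{\max}\ \le\ x_m\ \le\ \frac{S}{a_m}\,x_{\max}.
\]
Setting $P=a_1/a_m$ and $Q=S/a_m$, this reads $\log_\lambda(x_m/x_{\max})\in[\log_\lambda P,\log_\lambda Q]$. Since $\lfloor b\rfloor-\lfloor a\rfloor$ differs from $b-a$ by strictly less than $1$, the band gap $d:=\lfloor\log_\lambda x_m\rfloor-\lfloor\log_\lambda x_{\max}\rfloor$ lies in the open interval $(\log_\lambda P-1,\ \log_\lambda Q+1)$, regardless of the particular solution. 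The entire task becomes choosing $\lambda$ so that this interval contains no integer divisible by $k$, for then $d\not\equiv 0\pmod k$ and $\chi(x_m)\neq\chi(x_{\max})$.

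Next I would match the two hypotheses to the two sign regimes for this interval. Using $m\ge 3$, so that the left side has at least two terms, one has $Q\ge(m-1)P\ge 2P$; combined with (i) this yields $P^{k-2}\ge m-1>1$, hence $P>1$, while (ii) similarly forces $P<1$ (and then $Q<1$). In the regime $P>1$ I would take $Q^{1/(k-1)}\le\lambda\le P$, which places $d$ in $\{1,\dots,k-1\}$; such a $\lambda>1$ exists exactly when $Q\le P^{k-1}$, and this rearranges to condition (i). In the regime $P<1$ I would take $P^{-1/(k-1)}\le\lambda\le Q^{-1}$, placing $d$ in $\{-(k-1),\dots,-1\}$; such a $\lambda$ exists exactly when $Q^{k-1}\le P$, which rearranges to condition (ii). Either way $d$ is a nonzero residue modulo $k$, completing the argument.

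The step I expect to be the main obstacle is this last packing argument, i.e.\ verifying that a single base $\lambda$ can be chosen uniformly over all solutions. After absorbing the two units of floor-rounding slack, the admissible window for $\log_\lambda(x_m/x_{\max})$ has length $\log_\lambda(Q/P)+2$ and must be fitted into one of the length-$k$ gaps between consecutive multiples of $k$ while staying clear of $0$; it is this fit that couples the exponents $k-1$ and $\tfrac{1}{k-1}$ to the coefficients and reproduces exactly the inequalities in (i) and (ii). A secondary point to pin down carefully is that each hypothesis selects an unambiguous sign regime (equivalently, fixes the signs of $\log_\lambda P$ and $\log_\lambda Q$), which is where $m\ge 3$ and the estimate $Q\ge(m-1)P$ are needed.
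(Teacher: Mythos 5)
Your proof is correct and takes essentially the same approach as the paper: both arguments use a logarithmic block coloring ($\log$-digit taken modulo $k$) together with the two-sided trap $\frac{a_1}{a_m}x_{\max} \le x_m \le \frac{S}{a_m}x_{\max}$ to force $x_m$ and $x_{\max}$ into different color classes for every positive solution. The differences are cosmetic: the paper fixes the base explicitly ($d=(S/a_m)^{1/(k-1)}$ for (i) and $d=(a_1/a_m)^{1/(k-1)}<1$ for (ii)) and uses ceilings, whereas you use floors and characterize the whole interval of admissible bases $\lambda>1$, with each hypothesis appearing as exactly the nonemptiness of that interval.
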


\begin{proof}
For the first condition, let $d:= \left(\frac{S}{a_m}\right)^{\frac{1}{k-1}}$. Define the coloring $\chi(n) := \lceil \log_d n \rceil \pmod k$. Suppose $(x_1,\dots, x_m)$ is a solution to $\E$, and let $M:=\max\{x_1,\dots,x_{m-1}\}$. Let $i$ be the unique integer such that $M\in (d^{i-1},d^i]$. Then $x_m = \frac{\sum_{i=1}^{m-1}a_ix_i}{a_m} \le d^{k-1}M \le d^{i+k-1}$. By hypothesis we have $d^{k-1} \le \left(\frac{a_1}{a_m}\right)^{k-1}$. Therefore $x_m \ge \frac{a_1M}{a_m} \ge dM > d^{i}$. We have shown that $\chi(x_m)\in [i+1,i+k-1]$, so $\chi(x_m) \neq \chi(M)$ and there are no $\chi$-monochromatic solutions to $\E$.  


For the second condition, suppose $S\le a_1^{\frac{1}{k-1}}a_m^{1-\frac{1}{k-1}}$ and that $(x_1,\dots,x_m)$ is a solution to $\E$. Again let $M = \max\{x_1,\dots,x_{m-1}\}$, and let $d = \left(\frac{a_1}{a_m}\right)^{\frac{1}{k-1}}.$ Define a $k$-coloring $\chi(n) = \lceil \log_d(n)\rceil \pmod k$. Suppose $M \in (d^{i+1},d^i]$, so $\chi(M) = i \pmod k$. Note $d<1$ since $a_1 <S <= a_1^{\frac{1}{k-1}}a_m^{1-\frac{1}{k-1}}$ implies $a_1<a_m$. Then $x_m = \frac{\sum_{i=1}^{m-1}a_i x_i}{a_m} \le \frac{SM}{a_m} \le dM\le d^{i+1}$. Moreover, $x_m \ge \frac{a_1M}{a_m} = d^{k-1}M>d^{i+k}$. Therefore $\lceil \log_d x_m \rceil \in [i+k-1,i+1]$, so $\chi(x_m) \neq \chi(M)$ and there are no $\chi$-monochromatic solutions to $\E$.
\end{proof}

 Recall that for any prime $p$, the $p$-adic valuation $v_p(x)$ is the largest integer $n$ such that $p^n$ divides $x$. Many useful colorings come from $p$-adic valuations and studying the divisibility properties of an equation's coefficients. We will freely use the fact that $v_p(xy) = v_p(x)+v_p(y)$ for all integers $x$ and $y$. In \cite{FoxKleitman} the following result was shown:
\begin{lemma}\label{LemmaDistinctPrimes}
Suppose $\E$ is an equation of the form
$ax+by+cz = 0$ with $v_p(a), v_p(b), v_p(c)$ all distinct for some prime $p$. Then $\E$ has degree of regularity at most 3.
\end{lemma}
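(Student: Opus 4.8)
The plan is to construct an explicit $4$-coloring of $\N$ that avoids monochromatic solutions to $\E$, which by definition shows $dor(\E)\le 3$. The coloring should be built from the $p$-adic valuation, since the hypothesis singles out the prime $p$ for which $v_p(a), v_p(b), v_p(c)$ are pairwise distinct. Concretely, I would define $\chi(x) := v_p(x) \pmod 4$ and check that no solution $(x,y,z)$ to $ax+by+cz=0$ can have all three of $x,y,z$ receiving the same color.

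The key algebraic input is the standard ultrametric fact about valuations: if $u+v+w=0$, then the minimum of $v_p(u), v_p(v), v_p(w)$ is attained at least twice. I would apply this to the three terms $u=ax$, $v=by$, $w=cz$. Using $v_p(ax)=v_p(a)+v_p(x)$ and similarly for the others, a monochromatic solution would force $v_p(x)\equiv v_p(y)\equiv v_p(z) \pmod 4$, and hence the three quantities $v_p(a)+v_p(x)$, $v_p(b)+v_p(y)$, $v_p(c)+v_p(z)$ would all be congruent to each other modulo $4$ only through the offsets $v_p(a),v_p(b),v_p(c)$. The heart of the argument is to show that because these three offsets are distinct, the ``minimum attained twice'' condition is violated, producing a contradiction. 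I expect the cleanest route is to argue that the two terms achieving the minimum valuation must have equal $p$-adic valuations, and then trace through how the distinctness of the offsets makes this impossible once the $v_p(x),v_p(y),v_p(z)$ are forced into the same residue class.

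The main obstacle is that the offsets $v_p(a),v_p(b),v_p(c)$ are only assumed \emph{distinct}, not distinct modulo $4$; so I cannot immediately conclude that the valuations of $ax,by,cz$ land in distinct residue classes mod $4$. The resolution is that monochromaticity controls $v_p(x),v_p(y),v_p(z)$ exactly (they are literally congruent mod $4$, being the colors), so I should work with the \emph{actual integer} valuations $v_p(ax),v_p(by),v_p(cz)$ rather than their residues: the two smallest of these must be genuinely equal, which pins down an exact relation among $v_p(a)+v_p(x)$, $v_p(b)+v_p(y)$, $v_p(c)+v_p(z)$. Distinctness of $v_p(a),v_p(b),v_p(c)$ combined with the congruence $v_p(x)\equiv v_p(y)\equiv v_p(z)\pmod 4$ should be exactly enough to rule out an exact tie among the two smallest terms, giving the contradiction. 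I would carry out the case analysis over which pair of terms attains the minimum, verifying in each case that an exact equality of valuations contradicts the distinctness hypothesis.

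Finally I would note that this produces a valid $4$-coloring, so $\E$ is not $4$-regular and therefore $dor(\E)\le 3$, completing the proof. A small point to double-check is that $\chi$ is well-defined on all of $\N$ including values where $v_p=0$ (these simply receive color $0$), and that the argument nowhere requires $a,b,c$ to be positive or coprime, so the lemma applies to the general equation $ax+by+cz=0$ as stated.
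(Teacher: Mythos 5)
There is a genuine gap, and it sits exactly at the point you flagged as the ``main obstacle.'' Your coloring $\chi(n) = v_p(n) \bmod 4$ rules out monochromatic solutions only when $v_p(a), v_p(b), v_p(c)$ are distinct \emph{modulo} $4$, not merely distinct as integers. Indeed, if $(x,y,z)$ is $\chi$-monochromatic, a tie between (say) the first two terms means $v_p(a)+v_p(x) = v_p(b)+v_p(y)$, i.e.\ $v_p(a)-v_p(b) = v_p(y)-v_p(x) \equiv 0 \pmod 4$, which is perfectly consistent with $v_p(a) \neq v_p(b)$. Concretely, let $\E$ be $x + 16y - 2z = 0$ and $p = 2$: the valuations are $0, 4, 1$, all distinct, so the lemma applies to $\E$. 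But the positive solution $(x,y,z) = (16,1,16)$ satisfies $16 + 16 - 32 = 0$ and has $v_2(x) = 4$, $v_2(y) = 0$, $v_2(z) = 4$, all congruent to $0 \pmod 4$, so it is monochromatic under your coloring; the exact tie $v_2(1\cdot x) = v_2(16\cdot y) = 4 < 5 = v_2(2z)$ occurs at the minimum, and no contradiction is available. So the key claim of your third paragraph --- that integer distinctness of the offsets plus the mod-$4$ congruence of $v_p(x), v_p(y), v_p(z)$ rules out an exact tie --- is false, and the proof collapses for every equation whose valuations are distinct but not distinct mod $4$.

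What your ultrametric argument does prove, essentially verbatim, is the paper's Lemma~\ref{LemmaDistinctPrimesModK} (with $k=4$, $m=3$): if the $v_p(a_i)$ are pairwise distinct \emph{modulo} $k$, then coloring by $v_p \bmod k$ forces the minimum valuation among the terms to be attained exactly once, so no monochromatic solutions exist. That is a strictly weaker statement (its hypothesis is stronger). The lemma you were asked to prove is due to Fox and Kleitman --- the paper cites it rather than reproving it --- and requires a smarter $4$-coloring: writing $d_1, d_2$ for the gaps between the sorted valuations, one needs to compose $v_p$ with a proper $4$-coloring of the distance graph on $\Z_{\ge 0}$ with distance set $\{d_1, d_2, d_1+d_2\}$ (ties at the minimum correspond exactly to monochromatic pairs at these distances); such a $4$-coloring always exists, but it is the map $u \mapsto u \bmod 4$ only when none of $d_1, d_2, d_1+d_2$ is divisible by $4$. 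Note also that enlarging the modulus cannot repair your argument: coloring by $v_p \bmod m$ with $m$ large enough to separate the valuations shows only that $\E$ is not $m$-regular, which gives $dor(\E) < m$, not $dor(\E) \le 3$.
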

If the condition in Lemma \ref{LemmaDistinctPrimes} is strengthened to distinct $p$-adic valuations modulo 3, then we obtain an improved bound on the degree of regularity. 
\begin{example}
Let $\E$ denote the equation $x+2y = 4z$. Consider the 3-coloring $\chi(n) = v_2(n) \pmod 3$. If $(x,y,z)$ is a solution to $\E$ and $\chi(x) = \chi(y) = \chi(z)$, then $v_2(x), v_2(2y),$ and $v_2(4z)$ are all distinct since these values are all different modulo 3. Let $\alpha = \min\{v_2(x),v_2(2y),v_2(4z)\}$. Then reducing each side of $\E$ modulo $p^{\alpha + 1}$ gives a contradiction. Therefore $\chi$ induces no monochromatic solutions to $\E$. 
\end{example}
The following lemma generalizes the proof in the example above. 
\begin{lemma}\label{LemmaDistinctPrimesModK}
    Let $\E$ be the equation $\sum_{i=1}^m a_i x_i$. If there is a prime $p$ for which $v_p(a_i) \not \equiv v_p(a_j) \pmod k$ for all $i\neq j$, then $\E$ is not $k$-regular. 
\end{lemma}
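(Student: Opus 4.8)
The plan is to prove the contrapositive-flavored statement directly by exhibiting a single $k$-coloring of all of $\N$ that avoids monochromatic solutions to $\E$; since such a coloring restricts to every $[n]$, this shows $R_k(\E) = \infty$ and hence that $\E$ is not $k$-regular. Generalizing the coloring in the preceding example, I would take $\chi(n) := v_p(n) \bmod k$, where $p$ is the prime supplied by the hypothesis. The whole argument then turns on tracking $p$-adic valuations of the individual terms $a_i x_i$.

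First I would suppose, toward a contradiction, that $(x_1,\dots,x_m)$ is a $\chi$-monochromatic solution, so that $v_p(x_i) \equiv v_p(x_j) \pmod k$ for all $i,j$. Using the additivity $v_p(a_i x_i) = v_p(a_i) + v_p(x_i)$ together with the hypothesis $v_p(a_i) \not\equiv v_p(a_j) \pmod k$ for $i \neq j$, I would conclude that the $m$ term-valuations $v_p(a_i x_i)$ are pairwise incongruent modulo $k$, and therefore pairwise distinct as integers. This is exactly the step that upgrades the example's mod-$3$ reasoning to arbitrary $k$.

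The key (and only mildly technical) point is then the ultrametric behavior of $v_p$. Since the valuations $v_p(a_i x_i)$ are all distinct, their minimum $\alpha := \min_i v_p(a_i x_i)$ is attained by a unique index. From the basic identity $v_p(u+v) = \min\{v_p(u), v_p(v)\}$ whenever $v_p(u) \neq v_p(v)$, applied inductively over the $m$ summands, it follows that $v_p\!\left(\sum_{i=1}^m a_i x_i\right) = \alpha < \infty$. But $\sum_{i=1}^m a_i x_i = 0$, and $v_p(0) = \infty$, a contradiction; equivalently, as in the example, one may phrase this as reducing $\E$ modulo $p^{\alpha+1}$ and observing that exactly one term survives. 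This uniqueness-of-minimum argument is the heart of the proof, and it is where I expect the only real care to be needed.

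Finally, since $\chi$ is defined on all of $\N$ and the argument above rules out monochromatic solutions regardless of the size of the entries $x_i$, the coloring witnesses $R_k(\E) = \infty$, so $\E$ is not $k$-regular, as claimed. I do not anticipate a genuine obstacle beyond correctly invoking the non-archimedean valuation identity and confirming that the hypothesis gives \emph{distinctness} of the term-valuations rather than merely distinctness modulo $k$.
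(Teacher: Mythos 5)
Your proposal is correct and follows essentially the same route as the paper's proof: the coloring $\chi(n) = v_p(n) \bmod k$, the observation that monochromaticity plus the hypothesis forces the valuations $v_p(a_i x_i)$ to be pairwise distinct, and the contradiction obtained from the uniquely attained minimum valuation $\alpha$ by reducing modulo $p^{\alpha+1}$. If anything, your write-up is slightly more careful than the paper's, which leaves the uniqueness-of-minimum and ultrametric step implicit (and contains a typo, writing $p^\alpha+1$ for $p^{\alpha+1}$).
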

\begin{proof}
 Define a $k$-coloring $\chi(n):= v_p(n) \pmod k$. Suppose $(x_1,\dots,x_m)$ is a monochromatic solution to $\E$. Then $v_p(a_i x_i) \neq v_p(a_j x_j)$ for $i\neq j$ since these values are distinct modulo $k$. Let $\alpha = \min_{i=1}^m\{v_p(a_ix_i)\}$. Then $\sum_{i=1}^m a_i x_i \not \equiv 0 \pmod{p^\alpha+1}$, so $\sum_{i=1}^m a_i x_i \neq 0$, a contradiction. 
\end{proof}

The following two results are similar to Lemma 5 and Lemma 6 in \cite{FoxKleitman}. Here we show that under additional  assumptions on $v_p(a+b)$ and $v_p(b+c)$, respectively, it follows that the degrees of regularity of certain equations are at most six, which is stronger than the corresponding best bounds in \cite{FoxKleitman}. We also show that another hypothesis on the order a particular group element further improves this upper bound to four.   

\begin{lemma}\label{LemmaUniquePrimes}
    Let $\E$ denote the equation $ax+by+cz = 0$. If $\E$ is not regular and $0 = v_p(a) = v_p(b) = v_p(a+b) <v_p(c)=:r$, then $dor(\E) < 6$. If additionally the element $-ab\inv$ in the multiplicative group  $G = \Z_{p^r}^\times$ has even order, then $dor(\E) < 4$. 
\end{lemma}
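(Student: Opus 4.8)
The plan is to exhibit an explicit $6$-coloring (respectively $4$-coloring) of $\N$ with no monochromatic solution to $\E$; this shows $\E$ is not $6$-regular (respectively not $4$-regular), and since regularity is downward closed in the number of colors, it gives $dor(\E)<6$ (respectively $dor(\E)<4$). Each color will be a pair $(\beta(n),\gamma(n))$, where $\beta$ takes two values and $\gamma$ takes three, or only two under the extra hypothesis. For $n\in\N$ write $n=p^{v_p(n)}u(n)$ with $p\nmid u(n)$. First I would record a preliminary observation: since $v_p(a+b)=0$ forces $a+b\not\equiv 0 \pmod p$, we cannot have $p=2$ (two odd numbers have even sum), so $p$ is odd; and the same congruence shows the reduction $g_0:=-ba\inv \bmod p$ is a nonidentity element of $(\Z/p)^\times$, hence has order $\ge 2$.

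The first coloring is $\beta(n):=\lfloor v_p(n)/r\rfloor \bmod 2$, designed to separate integers whose $p$-adic valuations differ by exactly $r$: since $\lfloor (t+r)/r\rfloor=\lfloor t/r\rfloor+1$, the relation $v_p(x)=v_p(z)+r$ implies $\beta(x)\neq\beta(z)$. The second coloring is $\gamma(n):=\Gamma(u(n)\bmod p)$, where $\Gamma\colon(\Z/p)^\times\to\{0,1,2\}$ is a proper coloring of the functional graph of the permutation $u\mapsto g_0 u$. That permutation partitions $(\Z/p)^\times$ into cycles all of length $\mathrm{ord}(g_0)\ge 2$, so such a $\Gamma$ exists with three colors in general and with only two colors when $\mathrm{ord}(g_0)$ is even; its defining property is $\Gamma(g_0 u)\neq\Gamma(u)$ for every $u$.

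The heart of the argument is a case analysis on a putative monochromatic solution $(x,y,z)$, organized by which terms attain $\mu:=\min\{v_p(ax),v_p(by),v_p(cz)\}$; at least two must, since the three terms sum to $0$. If $v_p(cz)=\mu$, then some other term ties it, say $v_p(ax)=v_p(cz)$, and since $v_p(a)=0$ and $v_p(c)=r$ this gives $v_p(x)=r+v_p(z)$, whence $\beta(x)\neq\beta(z)$ and the solution is not monochromatic. Otherwise $v_p(cz)>\mu$, so the minimum is attained by $ax$ and $by$ and $v_p(x)=v_p(y)=:s$; writing $x=p^s\xi$, $y=p^s\eta$, the identity $ax+by=-cz$ forces $v_p(a\xi+b\eta)=v_p(cz)-s\ge 1$, so $a\xi+b\eta\equiv 0\pmod p$, that is $\xi\equiv -a\inv b\,\eta=g_0\eta\pmod p$. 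Then $u(x)\equiv g_0\,u(y)\pmod p$, so $\gamma(x)\neq\gamma(y)$. In both cases two of $x,y,z$ receive different colors, so $(\beta,\gamma)$ admits no monochromatic solution. Counting colors yields $2\cdot 3=6$ in general and $2\cdot 2=4$ when $\mathrm{ord}(g_0)$ is even.

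Finally I would connect the stated hypothesis to $\mathrm{ord}(g_0)$. Since $p$ is odd, $G=\Z_{p^r}^\times\cong\Z_{p-1}\times\Z_{p^{r-1}}$ and the reduction homomorphism $G\to(\Z/p)^\times$ has kernel of odd order $p^{r-1}$; consequently the order of any element of $G$ has the same parity as the order of its reduction. Applying this to $g^{-1}=-ba\inv$, whose reduction is $g_0$ and whose order equals that of $g=-ab\inv$, shows the even-order hypothesis on $-ab\inv$ is exactly what lets $\gamma$ use two colors. The main obstacle is the equal-valuation case $v_p(x)=v_p(y)$: there $\beta$ carries no information, and one must extract the multiplicative relation $\xi\equiv g_0\eta$ and realize it as a proper coloring of the cycle decomposition of $(\Z/p)^\times$, which is precisely where the parity of $\mathrm{ord}(g_0)$—and hence the even-order hypothesis—enters. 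A secondary point requiring care is the verification that $p\neq 2$ and $g_0\neq 1$, both resting on $v_p(a+b)=0$; this same condition gives $v_p(a+b+c)=0$, so that no nonempty subset of $\{a,b,c\}$ sums to zero and $\E$ is automatically non-regular, consistent with the conclusion.
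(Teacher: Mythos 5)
Your proof is correct, and at its core it is the same construction as the paper's: a product coloring whose first factor records $\lfloor v_p(n)/r\rfloor \bmod 2$ (your $\beta$ is exactly the paper's coloring $C_2$, written more transparently), and whose second factor is a proper coloring of the cycle graph induced by multiplication by $-ab\inv$ (or its inverse), with the even-order hypothesis making that graph bipartite and hence yielding $2\times 2=4$ colors instead of $2\times 3=6$. You diverge in two worthwhile ways. First, the paper's graph lives on the residues $\{1,\dots,p^r-1\}$ modulo $p^r$, where cycles have varying lengths $\mathrm{ord}(g)/p^i$ and some vertices are divisible by $p$; you reduce everything to $(\Z/p)^\times$, where the cycles are precisely the cosets of $\langle g_0\rangle$ and all have length $\mathrm{ord}(g_0)$. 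This simplification obliges you to add a parity-transfer step --- $\mathrm{ord}(-ab\inv)$ in $\Z_{p^r}^\times$ is even iff $\mathrm{ord}(g_0)$ in $(\Z/p)^\times$ is even --- which you justify correctly via the cyclicity of $\Z_{p^r}^\times$ for odd $p$ and the odd, $p$-power order of the reduction kernel; in exchange, your main argument only ever needs the congruence $a\xi+b\eta\equiv 0\pmod p$ rather than anything modulo $p^r$ or $p^{2r}$. Second, organizing the case analysis by which of $v_p(ax),v_p(by),v_p(cz)$ attain the minimum (at least two must, since the terms sum to zero) is cleaner and more visibly exhaustive than the paper's analysis of the exponents of $q=p^{2r}$ with its WLOG normalization, and it avoids the bookkeeping of $C_1(n')$ versus $C_1(n'/p^r)$. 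You also observe, correctly, that the ``not regular'' hypothesis is redundant: $v_p(a+b)=0<v_p(c)$ already forces every nonempty subset of $\{a,b,c\}$ to have nonzero sum, so non-regularity follows from Theorem \ref{RadoRegularityThm}.
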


\begin{proof}
 Since $v_p(a)=v_p(b) =0$, let $g:=-ab\inv \in G$. Since $a+b \not\equiv 0 \pmod{p^r}$, it follows that $g$ is not the identity element of $G$. Let $\Gamma$ denote the graph with vertex set $\{1,\dots,p^r-1\}$ and edges $(x,y)$ if $x \equiv gy \pmod{p^r}$ (see Figure \ref{fig:CayleyGraph} for an example). Since $v_p(a+b) = 0$, it follows that $-ab\inv \not \equiv 1 \pmod p$, so $\Gamma$ is loopless. Then $\Gamma$ is a union of disjoint cycles, and each cycle has size $\frac{ord(g)}{p^i}$ for some $i$. If $ord(g)$ is even, then all of the cycles in $\Gamma$ have even length, and so $\Gamma$ is 2-colorable (note the conditions $0 = v_p(a) = v_p(b) = v_p(a+b)$ imply $p\neq 2$). Otherwise, $\Gamma$ is 3-colorable since each vertex has degree at most 2. Let $C_1$ be a proper vertex coloring of $\Gamma$ that uses the fewest number of colors. We will construct a (4- or 6-) coloring $C$ to show that $\E$ is not 4- or 6- regular and conclude $dor(\E)<4$ or $dor(\E) < 6$, respectively. 

Let $q:=p^{2r}$. For all $n \in \N$, write $n = q^\alpha n'$ with $n' \not\equiv 0 \pmod q$. Define the coloring $C_2$ to be $$C_2(n) = 
\begin{cases}
1 & \text{if } n' \not \equiv 0 \pmod{p^r},\\
2 & \text{if } n' \equiv 0 \pmod{p^r}.
\end{cases}
$$

Let $C$ be the product coloring $$C(n) =\begin{cases}
(C_1(n'),1) & \text{if } C_2(n) = 1\\
(C_1(n'/p^r),2)& \text{if } C_2(n) = 2.\\ 
\end{cases}. $$ We claim that $C$ is a coloring with no monochromatic solutions to $\E$. 

Suppose $(x,y,z)$ is a monochromatic solution to $\E$. Write $x = q^\alpha x', y = q^\beta y', z = q^\gamma z'$ with $x',y',z'\nmid q$ and $ax+by+cz = 0$. Without loss of generality, we may assume that at least one of $\alpha, \beta$, and $\gamma$ is zero, and in each case we will show a contradiction. 

Suppose first that $C_2(x) = C_2(y) = C_2(z) = 1.$

Case 1: Suppose $\alpha = 0$. Then if $\beta >0$, then we may reduce $\E$ modulo $p^r$ to obtain a contradiction since $b,c \equiv 0 \pmod{p^r}$, but $ax \not \equiv 0 \pmod{p^r}$. So we may assume $\beta = 0$. Now suppose $ax +by \equiv 0 \pmod{p^r}$. Then $y \equiv gx \pmod {p^r}$. But this is impossible since $x$ and $y$ would have different colors by the coloring $C_1$ (recall that $\Gamma$ is loopless). Therefore $ax+by \not \equiv 0 \pmod{p^r}$, and so $ax+by+cz \not \equiv 0 \pmod{p^r}$, a contradiction. 

Case 2: The case $\beta = 0$ is similar to the case $\alpha = 0$.

Case 3: Suppose $\gamma = 0$ and $\alpha, \beta >0$. Then $ax+by \equiv 0 \pmod q$, but $cz \not \equiv 0 \pmod {q}$, and so $ax+by +cz \not \equiv 0 \pmod q, $ a contradiction. Therefore $\alpha = 0$ or $\beta = 0$, and the proof follows from one of the previous cases. 

If $C_2(x) = C_2(y) = C_2(z) = 2$, then in all cases we may divide $x',y',$ and $z'$ by $p^r$, and the proof follows similarly. 

\end{proof}

\begin{lemma}\label{LemmaTwoPrimes}
Let $\E$ denote the equation $ax+by+cz = 0$. If $\E$ is not regular and $0 = v_p(a) <v_p(b)= v_p(c)= v_p(b+c) =: r$ for some prime $p$, then $dor(\E) < 6$. Write $b = p^r b'$ and $c = p^r c'$. If additionally the element $g:=-b'c'^{-1}$ in the multiplicative group  $G = \Z_{p^s}^\times$ has even order, then $dor(\E) < 4$. 
\end{lemma}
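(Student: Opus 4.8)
The plan is to mirror the proof of Lemma \ref{LemmaUniquePrimes}, exchanging the combinatorial roles of the variables: the pair $y,z$, whose coefficients $b,c$ now share the valuation $r$, will play the part previously played by the unit pair $x,y$, while the unit variable $x$ takes the role of the old distinguished variable $z$. First I would record the algebraic setup. Since $v_p(a)=0$, the coefficient $a$ is a unit mod $p$, and writing $b=p^rb'$, $c=p^rc'$ the factors $b',c'$ are units mod $p$. The hypothesis $v_p(b+c)=r$ says exactly that $b'+c'\not\equiv 0\pmod p$, which rewrites as $g=-b'(c')\inv\not\equiv 1\pmod p$; thus $g$ is a non-identity element of $G$. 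I would also note that these valuation hypotheses already force $\E$ to be non-regular: by Theorem \ref{RadoRegularityThm} regularity would require some nonempty subset of $\{a,b,c\}$ to sum to zero, but every such sum is nonzero, since $a$, $a+b$, $a+c$, and $a+b+c$ are units mod $p$ while $b$, $c$, and $b+c$ have valuation exactly $r$. Hence the stated non-regularity hypothesis is automatic.

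Next I would build the Cayley-type graph $\Gamma$ on the residues $\{1,\dots,p^s-1\}$ with an edge $y\sim z$ whenever $z\equiv gy\pmod{p^s}$, the natural choice being $s=r$ to match the modulus $p^r$ used in Lemma \ref{LemmaUniquePrimes}. Exactly as there, $\Gamma$ is loopless because $g\not\equiv 1\pmod p$ (note $p$ is odd, since $p=2$ would force $v_p(b+c)>r$), and $\Gamma$ is a disjoint union of cycles whose lengths divide $\mathrm{ord}(g)$; hence $\Gamma$ is $3$-colorable in general and $2$-colorable when $\mathrm{ord}(g)$ is even. Let $C_1$ be an optimal proper coloring of $\Gamma$, let $C_2$ be the two-valued scale coloring built from $q=p^{2r}$ and the $q$-free part $n'$ (coloring by whether $p^r\mid n'$) as in the previous proof, and let $C=(C_1,C_2)$ be the product coloring, which uses $6$ colors in general and $4$ when $\mathrm{ord}(g)$ is even. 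Since $C(nq)=C(n)$, any monochromatic solution may be rescaled so that the minimum of the three $q$-adic exponents is $0$.

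Finally I would run the case analysis. Given a monochromatic solution $(x,y,z)$, the identity $ax=-(by+cz)=-p^r(b'y+c'z)$ gives $v_p(x)=r+v_p(b'y+c'z)$, so whenever the right-hand side is nonzero the unit variable $x$ is automatically divisible by $p^r$; this is precisely the structural behavior that lets $x$ mimic the old heavy variable $z$ of Lemma \ref{LemmaUniquePrimes}. Reducing $\E$ modulo suitable powers of $p$ within each $C_2$-class, I would show that the surviving terms force either a valuation mismatch (contradicting $C_2$) or the congruence $z\equiv gy$ at the level governing $\Gamma$ (contradicting $C_1$), in each subcase determined by which of the three $q$-adic exponents vanishes. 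The $2$-colorability of $\Gamma$ in the even-order case upgrades the conclusion from $dor(\E)<6$ to $dor(\E)<4$, just as before.

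The main obstacle I anticipate is bookkeeping the modular reductions correctly now that two coefficients, rather than one, are heavy: reduction modulo $p^r$ annihilates both $by$ and $cz$ and leaves only $ax$, so the $y$--$z$ phase relation does not surface at level $p^r$ but only after dividing through by the common $p^r$ and descending to the level recorded by $b'y+c'z$. Pinning down the exact exponent $s$ at which $z\equiv gy$ becomes the operative obstruction, and verifying that the scale coloring $C_2$ cleanly separates the case $v_p(y)=v_p(z)$ (where the phase relation is active) from the case $v_p(y)\neq v_p(z)$ (where a pure valuation argument suffices), is where the argument will demand the most care.
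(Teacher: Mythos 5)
Your proposal is correct and matches the paper's own proof essentially step for step: the same cycle graph $\Gamma$ on $\{1,\dots,p^r-1\}$ built from $g=-b'c'^{-1} \pmod{p^r}$, the same scale coloring from $q=p^{2r}$, the same product coloring (six colors in general, four when $\mathrm{ord}(g)$ is even), and the same case analysis on which $q$-adic exponent vanishes, with the congruence $z'\equiv gy' \pmod{p^r}$ extracted from $by+cz\equiv 0 \pmod{q}$ contradicting $C_1$ in the key case. You also correctly read the undefined $s$ in the statement as $r$, and your observation that $\Gamma$ is loopless (since $v_p(b+c)=r$ forces $g\not\equiv 1 \pmod p$) is right where the paper's proof misstates this as ``not loopless.''
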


\begin{proof}
 Since $v_p(b+c) = r$ and $v_p(b')=v_p(c') = 0$, $g \in G$ and $g$ is not the identity element of $G$. Let $\Gamma$ denote the graph with vertex set $\{1,\dots,p^r-1\}$ and edges $(x,y)$ if $x \equiv gy \pmod{p^r}$. Then $\Gamma$ is a union of disjoint cycles, and each cycle has size $\frac{ord(g)}{p^i}$ for some $i$. Note that $\Gamma$ is not loopless since $v_p(b+c) = r$. If $g$ has even order, then $\Gamma$ is 2-colorable, and otherwise $\Gamma$ is 3-colorable. Let $C_1$ be a proper vertex coloring of $\Gamma$ that uses the fewest number of colors. We will construct a (4- or 6-) coloring $C$ to show that $\E$ is not 4- or 6- regular and conclude $dor(\E)<4$ or $dor(\E) < 6$, respectively. 
 
 Let $q:=p^{2r}$, and for all $n\in \N$, write $n = q^\alpha n'$ with $n' \not \equiv 0 \pmod{p^{2r}}$. Define the coloring $C_2$ to be $$C_2(n) = \begin{cases}
 1 & \text{if }n' \not \equiv 0 \pmod{p^r}, \\
 2 & \text{if }n' \equiv 0 \pmod {p^r}. 
 \end{cases}$$
 
Let $C$ be the product coloring $$C(n) =\begin{cases}
(C_1(n'),1) & \text{if } C_2(n) = 1\\
(C_1(n'/p^r),2)& \text{if } C_2(n) = 2.\\ 
\end{cases}.$$ Suppose $(x,y,z)$ is a monochromatic solution to $\E$ with respect to $C$. Write $x = q^\alpha x', y = q^\beta y', z = q^\gamma z'$ with $x',y',z'\nmid q$ and $ax+by+cz = 0$. Without loss of generality, we may assume that at least one of $\alpha, \beta$, and $\gamma$ is zero, and in each case we will show a contradiction. 
 
 Suppose $C_2(x') = C_2(y') = C_2(z') = 1.$
 Case 1: Suppose $\alpha = 0$. Then $ax+by+cz \not \equiv 0 \pmod{p^r}$.
 Case 2: Suppose $\alpha >0$ and $\beta = 0$. Then if $\gamma >0$, then $ax+by+cz \not \equiv 0 \pmod q$, so assume $\gamma = 0$. If $by+cz \equiv 0 \pmod q$, then $z =z' \equiv -b' c'^{-1}y' \pmod{p^r}$. But by coloring $C_1$ it follows that $z$ and $y$ have different colors, a contradiction. The case $\gamma = 0$ is similar. 
 
 Suppose $C_2(x') = C_2(y') = C_2(z') = 2$. If $\alpha = 0$, then $ax+by+cz \not \equiv 0 \pmod {q}$. The cases $\beta = 0, \gamma = 0$ are similar to above. 
 \end{proof}

\begin{figure}
    \centering
    \includegraphics[scale=0.3]{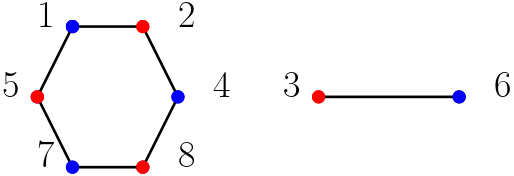}
    \caption{2-coloring of graph with vertex set [8] and edges $(x,y)$ if $x \equiv 2y \pmod 9$. }
    \label{fig:CayleyGraph}
\end{figure}

\section{Two parameter 3-color Rado numbers}\label{TwoParamRadoNumbers}
In \cite{LandmanRobertson} the formulas for the 2-color Rado numbers $R_2(a(x-y) = bz)$ and $R_2(a(x+y) = bz)$, $a,b\ge 0$ are given. However, a formula for the 2-color Rado numbers $R_2(ax+by = cz)$ is unknown. Here we give bounds on some 3-color Rado numbers of the form $R_3(a(x-y) = bz)$ and $R_3(a(x+y) = bz)$ and prove our main results. 
\subsection{Rado Numbers \texorpdfstring{$R_3(a(x-y) = bz)$}{Lg}} 

By Rado's theorem, the equation $a(x-y) = bz$ is regular. Table \ref{Threecoloraxminusayequalsbz} gives values of the 3-color Rado numbers $R_3(a(x-y) = bz)$ for $1\le a,b\le15$.


The following lemma gives a simple lower bound on the Rado numbers $R_k(a(x-y) = bz)$.
\begin{lemma}\label{a-adic coloring lower bound}
    Suppose $a,b \ge 1$ and $\gcd(a,b) = 1$. Then $R_k(a(x-y) = bz) \ge a^k$. 
\end{lemma}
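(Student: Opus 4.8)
The plan is to prove the lower bound by exhibiting an explicit $k$-coloring of $[a^k-1]$ that avoids monochromatic solutions to $a(x-y)=bz$; since such a coloring witnesses $R_k(a(x-y)=bz) > a^k-1$, the inequality $R_k(a(x-y)=bz)\ge a^k$ follows immediately. The case $a=1$ is trivial (every Rado number is at least $1$), so I would assume $a\ge 2$. As the name \emph{$a$-adic coloring} suggests, the coloring to use is $\chi(n) := v_a(n) \bmod k$, where $v_a(n)$ denotes the largest exponent $j$ with $a^j \mid n$. The first observation is that this is a genuine $k$-coloring of $[a^k-1]$, and in fact a rigid one: for $1\le n \le a^k-1$ we have $a^{v_a(n)} \le n < a^k$, so $v_a(n) \le k-1$ and hence $\chi(n)=v_a(n)$ with no wraparound. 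Consequently a monochromatic triple $(x,y,z)$ must have genuinely \emph{equal} valuations $v_a(x)=v_a(y)=v_a(z)=:j$, not merely equal residues modulo $k$.

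The core of the argument is then a descent. Given a monochromatic solution, I would write $x=a^j x_1$, $y=a^j y_1$, $z=a^j z_1$ with $a\nmid x_1, y_1, z_1$, substitute into $a(x-y)=bz$, and cancel $a^j$ to reduce to the single relation $a(x_1-y_1)=bz_1$. This forces $a \mid bz_1$, and since $\gcd(a,b)=1$ we conclude $a \mid z_1$, contradicting $a \nmid z_1$. Hence no monochromatic solution exists, and the lower bound follows.

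The one point that requires care, and the step I expect to be the main (if modest) obstacle, is that $a$ may be composite, so $v_a$ is \emph{not} an additive valuation: the identity $v_a(xy)=v_a(x)+v_a(y)$ can fail. The argument is designed to sidestep this, since it never uses additivity of $v_a$, only the elementary factorization ``$v_a(n)=j$ implies $n=a^j n_1$ with $a\nmid n_1$,'' which holds verbatim for composite $a$, together with the coprimality $\gcd(a,b)=1$ in the final divisibility step. It is worth double-checking that dividing through by exactly $a^j$ is legitimate (the left side carries one extra factor of $a$, matching $bz_1$ on the right) and that the coloring genuinely uses the $k$ residues $0,\dots,k-1$, so that it is a bona fide $k$-coloring of the entire interval $[a^k-1]$.
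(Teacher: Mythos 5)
Your proof is correct and follows essentially the same route as the paper's: color $[a^k-1]$ by the $a$-adic valuation $v_a$ (which needs no reduction mod $k$ on this interval) and use $\gcd(a,b)=1$ to derive a contradiction in valuations for any monochromatic solution. Your cancellation-plus-Euclid's-lemma phrasing of the final step, your explicit handling of $a=1$, and your remark that additivity of $v_a$ is never needed are all sound, and match the substance of the paper's argument.
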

\begin{proof}
Let $v_a(n)$ denote the highest power of $a$ that divides $n$. Then $v_a:[1,a^k-1]\to \{0,1,\dots,k-1\}$ defines a $k-$coloring of $[1,a^k-1]$ that has no monochromatic solutions of $a(x-y) = bz$. To see this, suppose $(x,y,z)$ is a monochromatic solution in color $c$. If $x \le y,$ then there is no $z \in [1,a^k-1]$ that satisfies $a(x-y) = bz$, so suppose $x>y$. Then $x = a^c x'$ and $y = a^c y'$, where $a \nmid x',y'$. Since $\gcd(a,b) = 1$, $v_a(z) = v_a(bz) = v_a(a(x-y)) = v_a(a^{c+1}(x'-y'))\ge c+1$, a contradiction. 
\end{proof}


For the case $b = a-1$, we have an improved lower bound on $R_3(a(x-y) = (a-1)z).$
\begin{lemma}\label{Lemma3colorbEqualsaMinus1LowerBound}
$R_3(a(x-y) = (a-1)z) \ge a^3+(a-1)^2$.
\end{lemma}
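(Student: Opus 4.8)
The plan is to prove the inequality by exhibiting an explicit $3$-coloring $\chi$ of $[1,a^3+(a-1)^2-1]$ with no monochromatic solution to $a(x-y)=(a-1)z$; any such coloring witnesses $R_3(a(x-y)=(a-1)z)>a^3+(a-1)^2-1$, which is the claim. The cleanest way to organize the construction is through \emph{solution-free classes}: since $\gcd(a,a-1)=1$, every solution satisfies $(a-1)\mid(x-y)$ and $a\mid z$, so $x=y+(a-1)t$ and $z=at$ for some $t\ge 1$, whence $v_a(z)=1+v_a(x-y)$. Consequently each $a$-adic valuation class $\{n:v_a(n)=c\}$ is solution-free: in color $c$ one would have $v_a(x)=v_a(y)=c$, forcing $v_a(z)=1+v_a(x-y)\ge c+1>c$, a contradiction. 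This is exactly the mechanism behind Lemma~\ref{a-adic coloring lower bound}, and it shows that $v_a$ partitions $[1,a^3-1]$ into the three solution-free classes $v_a=0,1,2$.

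The task is then to absorb the extra block $B:=[a^3,a^3+(a-1)^2-1]$ into three solution-free classes. First I would reduce the problem: by the valuation argument above, any monochromatic solution under the $v_a$ coloring must use a variable in $B$, so I would split into the families ``$x\in B$'', ``$y\in B$'', and ``$z=a^3$'' (the unique in-range value with $v_a>2$). The families with $x\in B$ or $y\in B$ but $z<a^3$ are easy, since $z=at$ is a multiple of $a$ and hence never carries the color reserved for integers coprime to $a$, so a short valuation check disposes of them. The genuinely delicate family is $z=a^3$, i.e.\ $t=a^2$, giving the solutions $(y,\,y+a^2(a-1),\,a^3)$ with $1\le y\le 2a(a-1)$; here the companions satisfy $x\equiv y\pmod{a^2}$, and the count $(a-1)^2$ of integers one is forced to treat specially is precisely the surplus over $a^3$ in the bound. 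I would then define $\chi$ on $B$ (and, as needed, on a thin transition band just below $a^3$), and reverify each family case by case, distinguishing the color assigned to $a^3$ and the valuations of the companion values.

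The hard part will be that this cannot be done by a purely local patch of the valuation coloring, and demonstrating a consistent choice is the crux. The obstruction is a \emph{locking} phenomenon: an integer arising as $z=at$ with $v_a(t)=1$ must lie in the sparse ``$v_a=2$'' class, or else it already produces a monochromatic solution in its role as $z$; this pins $\chi(a^2)$ to that class. The same reasoning applied to $a^3$ as a value of $z$ rules out colors $0$ and $1$ and pushes $a^3$ toward the very same class, which then creates the monochromatic solution $(a^2,a^3,a^3)$. Attempting to relieve this by recoloring $a^2$, or by recoloring the coprime ``companion band'' just below $a^3$, merely reproduces the same conflict one valuation level down, so the recursion must be broken using the finite range and the exact $(a-1)^2$ budget. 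The main effort, therefore, is to select the colors on $B$ and the transition band so that both $(a^2,a^3,a^3)$ and all the solutions $(y,\,y+a^2(a-1),\,a^3)$ are destroyed \emph{simultaneously}, and then to prove the decisive point: that the modified top-region coloring introduces no new monochromatic solution among the medium-sized integers. Verifying this absence is where essentially all the work lies.
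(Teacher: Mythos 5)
Your problem analysis is accurate and matches the paper's starting point: the paper's proof of Lemma \ref{Lemma3colorbEqualsaMinus1LowerBound} is also an explicit $3$-coloring of $[1,a^3+(a-1)^2-1]$ built from the $a$-adic valuation $v_a$, and you correctly isolate the structure of solutions ($x=y+(a-1)t$, $z=at$, so $v_a(z)=1+v_a(x-y)$), the fact that the pure valuation classes are solution-free, and the two obstructions to placing $a^3$: the solution $(a^3,a^2,a^3)$ if $a^3$ joins the $v_a=2$ class, and the solutions $(y+a^2(a-1),\,y,\,a^3)$ with $1\le y\le 2a(a-1)$ if it joins a coprime class. But that is where your argument stops. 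You never define the coloring; you state what it must accomplish and concede that ``demonstrating a consistent choice is the crux'' and that verifying it ``is where essentially all the work lies.'' The explicit coloring and its case-by-case verification \emph{are} the content of this lemma, so what you have is a correct reduction plus a statement of the difficulty, not a proof.

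Moreover, the fix you sketch --- recoloring only the block $B=[a^3,a^3+(a-1)^2-1]$ and ``a thin transition band just below $a^3$'' --- is not merely incomplete; its natural implementations fail. Suppose $a^3$ joins the coprime class, so every coprime ``companion'' $x=y+a^2(a-1)$ (with $y\le 2a(a-1)$ coprime) must leave that class. If you move the companions into the $v_a=1$ class, then $(a^3-a^2+a,\ a^3-a^2+1,\ a)$ is monochromatic: here $a(x-y)=a(a-1)=(a-1)z$, the outer two entries have $v_a=1$, and $a^3-a^2+1$ is the companion of $1$. If instead you move them into the $v_a=2$ class, then $(a^3-a+1,\ a^3-a^2+1,\ a^2)$ is monochromatic, since $a^3-a+1$ and $a^3-a^2+1$ are the companions of $a^2-a+1$ and of $1$. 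Splitting the companions between the two classes fails by a short forcing chain (e.g.\ $(a^3-a+1,\ a^3-a^2+a,\ a^2-a)$ blocks the first option for $a^3-a+1$), and each attempted repair creates conflicts at lower scales --- the recursion you yourself flag. The paper breaks this recursion not with a local patch but with a global restructuring: color $1$ is $\{i: v_a(i)=1\}$, color $0$ is $\{i: v_a(i)=2\}\cup\{i: v_a(i)=0,\ i<a^2-a \text{ or } i>a^3-a\}$, and color $2$ is $\{i: v_a(i)=0,\ a^2-a\le i\le a^3-a\}\cup\{a^3\}$. The two cut points $a^2-a$ and $a^3-a$ differ by exactly $a^2(a-1)$, so translation by $a^2(a-1)$ carries the bottom coprime band into the middle one and the middle into the top; hence every companion pair is automatically split across distinct colors, while the two outer coprime pieces in color $0$ are individually too narrow (width $<a(a-1)$) and jointly too far apart (gap $>a(a-1)^2$) to contain any difference $(a-1)as$ with $1\le s\le a-1$. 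Note that this coloring recolors integers down at scale $a^2$, far below any thin band near $a^3$; finding this band structure and verifying it case by case is precisely the missing work.
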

\begin{proof}
We will construct a coloring of $[1,a^3+(a-1)^2-1]$ that induces no monochromatic solutions to $a(x-y) = (a-1) z$. 
 Define $$\chi(i):= \begin{cases} 
  0 &v_a(i) = 2 \text{ or } (v_a(i) = 0 \text{ and } (i<a^2-a \text{ or } i>a^3-a)), \\
  1 & v_a(i) = 1,\\
  2 & \text{otherwise.}
 \end{cases}$$
 
 Let $(x,y,z)$ be a positive integer solution to $a(x-y) = (a-1)z$.
 Suppose $\chi(x) = \chi(y) = 0$. If $v_a(x) = v_a(y) \ge 2$, then since $a$ and $a-1$ are relatively prime, $v_a(z) = v_a((a-1)z) = v_a(a(x-y)) \ge 3$, and $\chi(z) \neq 0$. If $v_a(x) = 2$ and $v_a(y) = 0$, then $v_a(z) = v_a(a(x-y)) = 1$, so $\chi(z) = 1$. The case $v_a(x) = 0$ and $v_a(y) = 2$ is similar. Note that $x>y$ since $z$ must be positive. If $v_a(x) = v_a(y) = 0$, $x\ge a^3-a$ and $y \le a^2-a$, then $a(x-y) \ge a(a^3-a^2) > (a-1)(a^3-a)$. Then $z \in [a^3-a,a^3+(a-1)^2-1]$. Since $(a-1) z = a(x-y)$, it follows that $v_a(z) \ge 1$, so $v_a(z)\neq 0$. But the only value  $ z \in [a^3-a,a^3+(a-1)^2-1]$ with $v_a(z) \ge 2$ is $a^3$, so $\chi(z) \neq 0$. Now if $v_a(x) = v_a(y) = 0$ and $x,y \in [1,a^2-a)$ or $x,y \in (a^3-a, a^3+(a-1)^2-1]$, then $x-y < a^2-a$, so $(a-1) z = a(x-y) < a^3-a^2 $. Then $v_a(z) \in \{1,2\}.$ If $v_a(z) = 1$, then $\chi(z) = 1 \neq 0$. If $v_a(z) = 2$, then $z \ge a^2$ and $(a-1)z \ge a^3-a^2$, a contradiction.
 
 Now suppose $\chi(x) = \chi(y) = 1$. Then $v_a(z) = v_a(a(x-y)) \ge 2$, so $\chi(z) \neq 1$. 
 
 If $\chi(x) = \chi(y) = 2$, then either $v_a(x) = v_a(y) = 0$, or $x = a^3$. In the former case we have $v_a(z) = v_a(a(x-y)) \ge 1$. We have $z \neq a^3$ since this implies $x-y = a^3-a^2$, but this is impossible since $x,y \in [1,a^3-(a-1)^2-1]$, and it follows that $\chi(z) \neq 2$. If $x = a^3$, then $y \neq a^3$ and $v_a(y) = 0$. Therefore $v_a(z) = v_a(a(x-y)) = 1$, so $\chi(z) \neq 2$. 
 
 Therefore there are no monochromatic solutions to $a(x-y) = (a-1)z$. 
\end{proof}



\subsection{3-regularity of \texorpdfstring{$a(x+y) = bz$}{Lg}}
In his thesis \cite{RadoThesis}, Rado proved the following result on the family of equations $a(x+y) = bz$. 
\begin{theorem}[Rado] \label{RadoaxaybzThm}
If $a/b \neq 2^k$ for all $k\in \Z$, then $dor(a(x+y) = bz) \le 3$. 
\end{theorem}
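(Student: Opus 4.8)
The plan is to construct an explicit $4$-coloring of $\N$ with no monochromatic solution to $a(x+y)=bz$; since a monochromatic-free $4$-coloring shows the equation is not $4$-regular, this yields $dor(a(x+y)=bz)\le 3$. First I would reduce to the coprime case: dividing the equation by $\gcd(a,b)$ does not change its solution set, so I may assume $\gcd(a,b)=1$. Writing $a=2^{\alpha}a'$ and $b=2^{\beta}b'$ with $a',b'$ odd, I would record the decisive reformulation of the hypothesis: $a/b$ is a power of $2$ if and only if $a'=b'$ (equivalently, $a/b\neq 2^k$ for all $k$ exactly when $\log_2(a/b)$ is irrational). That the odd parts are distinct is the one algebraic fact the whole argument must exploit.

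Next I would stratify the solutions according to the $2$-adic valuations of $x$ and $y$. Writing each variable as $2^{v_2(\cdot)}$ times its odd part and comparing valuations on both sides of $a(x+y)=bz$ gives, after relabeling so that $v_2(x)\le v_2(y)$,
$$v_2(z)=\alpha-\beta+v_2(x)+w, \qquad b'\,\mathrm{od}(z)=a'\,\mathrm{od}(x+y),$$
where $w=0$ when $v_2(x)\neq v_2(y)$ and $w=v_2\!\big(\mathrm{od}(x)+\mathrm{od}(y)\big)\ge 1$ when $v_2(x)=v_2(y)$. The \emph{generic} solutions, those with $v_2(x)\neq v_2(y)$, are controlled by the forced valuation shift: a coloring sensitive to $v_2(n)$ modulo a suitably large integer separates $z$ from the smaller-valuation variable $x$ whenever $\alpha\not\equiv\beta$, and after the coprime reduction $\alpha\equiv\beta$ forces $\alpha=\beta=0$, i.e. $a,b$ both odd. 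The residual difficulty is therefore concentrated in the \emph{diagonal} solutions $v_2(x)=v_2(y)$ (and in the all-odd generic case), where $v_2(z)$ is not pinned down — an adversary can tune $w=v_2(\mathrm{od}(x)+\mathrm{od}(y))$ freely — and one must instead break the multiplicative relation $b'\,\mathrm{od}(z)=a'\,\mathrm{od}(x+y)$.

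I would accordingly take the final coloring to be a product of two components, totalling four colors: a $2$-adic component based on $v_2(n)\bmod 2$, tailored to defeat the generic solutions, and a second component acting on the odd part $\mathrm{od}(n)$, tailored to defeat the diagonal ones. The role of the hypothesis is that $a'\neq b'$ makes $a'/b'$ an odd ratio different from $1$, hence $\log_2(a'/b')$ irrational, so the relation $\mathrm{od}(z)=(a'/b')\,\mathrm{od}(x+y)$ cannot be satisfied by numbers the odd-part coloring assigns the same color. I would attempt to realize this second component either by a logarithmic (equidistribution-type) rule on the odd integers, using the irrationality of $\log_2(a'/b')$ directly, or by a Cayley-graph coloring modulo a carefully chosen odd prime in the spirit of Lemmas \ref{LemmaUniquePrimes} and \ref{LemmaTwoPrimes}, and then verify case-by-case that the combined four-coloring never admits a monochromatic triple.

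The step I expect to be the main obstacle is the design and verification of this odd-part component. The relation to be broken is not a clean pairwise ratio: it couples all three odd parts through $\mathrm{od}(z)=(a'/b')\,\mathrm{od}(x+2^{\,t-s}y)$, so the loopless-cycle colorings used earlier in the paper do not transfer verbatim, and the factor $2^{\,t-s}$ re-enters as a unit modulo any odd prime. Producing a \emph{single} choice of the second coloring that works uniformly over all coprime pairs $(a',b')$ with $a'\neq b'$, and interleaving it with the valuation component so that the resulting four-coloring avoids a monochromatic solution in every one of the cases above, is where the real work lies; the irrationality of $\log_2(a/b)$ guaranteed by the hypothesis is precisely what should make such a uniform construction possible.
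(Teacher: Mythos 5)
First, a point of context: the paper itself does not prove this statement at all --- it is quoted from Rado's thesis \cite{RadoThesis} and used as a black box --- so your proposal has to stand on its own. As written, it does not: it is a plan whose decisive step is explicitly deferred. The preliminary reductions are fine (dividing by $\gcd(a,b)$, and observing that the hypothesis $a/b\neq 2^k$ is equivalent, after that reduction, to $a$ and $b$ not both being powers of $2$), but the object that would actually prove the theorem --- your ``odd-part component'' of the four-coloring --- is never constructed; you yourself flag its design and verification as the main obstacle and ``where the real work lies.'' A proof attempt that postpones exactly the step carrying all of the content has a genuine gap, and the surrounding $2$-adic bookkeeping does not reduce the problem to anything previously solved; indeed the prime $2$ is the wrong prime to stratify by here.

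The gap can be closed with tools already in the paper, and your instinct to invoke Lemmas \ref{LemmaUniquePrimes} and \ref{LemmaTwoPrimes} only ``in the spirit of'' was too timid: they apply verbatim, and your $v_2$ layer is unnecessary. After reducing to $\gcd(a,b)=1$, the hypothesis gives an odd prime $p$ dividing exactly one of $a,b$. Write the equation as $ax+ay+(-b)z=0$; it is not regular, since by Theorem \ref{RadoRegularityThm} a zero subset sum of $\{a,a,-b\}$ would force $a=b$ or $2a=b$, i.e.\ $a/b\in\{2^0,2^{-1}\}$, which is excluded. If $p\mid b$, then because $p$ is odd and $p\nmid a$ we have $0=v_p(a)=v_p(a)=v_p(2a)<v_p(-b)=:r$, exactly the hypothesis of Lemma \ref{LemmaUniquePrimes}; if instead $p\mid a$, then $0=v_p(-b)<v_p(a)=v_p(a)=v_p(2a)=:r$, exactly the hypothesis of Lemma \ref{LemmaTwoPrimes}. (Note $v_p(2a)=v_p(a)$ is precisely where $p=2$ would fail, which is why the theorem's hypothesis is what it is.) In either case the two coefficients entering the group element are equal, so $g=-aa\inv=-1$ in $\Z_{p^r}^\times$, whose order is $2$ --- even --- for every odd $p$. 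Both lemmas then give $dor(a(x+y)=bz)<4$, i.e.\ at most $3$, uniformly in $(a,b)$. Your worry that the loopless-cycle colorings ``do not transfer verbatim'' because the factor $2^{t-s}$ re-enters as a unit is unfounded: that factor never appears in the lemmas' hypotheses, and the equality of the two coefficients puts you in the easiest possible case of the paper's machinery.
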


The following lemma strengthens Rado's result to include the case when $a/b = 2^k$ for some integer $k$. 

\begin{lemma}
$R_3(x+y= bz) = \infty$ for $b\ge 4$, and $R_3(a(x+y) = z) = \infty$ for $a \ge 2$. Moreover, $dor(a(x+y) =bz) \le 3$ unless $a = 1, b =2$.
\end{lemma}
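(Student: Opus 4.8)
The plan is to derive all three assertions from the two algebraic tests in Lemma~\ref{LemmaAlgebraicConditions} together with Rado's Theorem~\ref{RadoaxaybzThm}, so that essentially no new computation is required. For the first assertion I would present $x+y=bz$ in the normalized shape of Lemma~\ref{LemmaAlgebraicConditions} with $m=3$, $a_1=a_2=1$, and $a_m=b$, giving $S=2$, and invoke condition (ii) at $k=3$: the inequality $S \le a_1^{1/(k-1)}a_m^{1-1/(k-1)}$ reads $2 \le \sqrt{b}$, which holds exactly for $b \ge 4$, so these equations are not $3$-regular and $R_3(x+y=bz)=\infty$. For the second assertion I would instead set $a_1=a_2=a$ and $a_m=1$, so $S=2a$, and invoke condition (i) at $k=3$: the inequality $S \le a_1^{k-1}/a_m^{k-2}$ reads $2a \le a^2$, which holds exactly for $a \ge 2$, giving $R_3(a(x+y)=z)=\infty$. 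Each of these is a one-line check once the coefficients are matched to the lemma's hypotheses.

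For the final assertion I would split on whether $a/b$ is a power of $2$. First note that dividing the equation by $d=\gcd(a,b)$ preserves the set of positive-integer solutions, hence leaves both $R_k$ and $dor$ unchanged, so I may assume $\gcd(a,b)=1$. If $a/b \ne 2^k$ for every integer $k$, then Theorem~\ref{RadoaxaybzThm} gives $dor(a(x+y)=bz) \le 3$ immediately. The genuinely new case is $a/b=2^k$, where coprimality forces $(a,b)=(2^j,1)$ or $(a,b)=(1,2^j)$ for some integer $j \ge 0$. When $(a,b)=(2^j,1)$ with $j \ge 1$ we have $a \ge 2$, so the second assertion shows the equation is not even $3$-regular and thus $dor \le 2$; when $(a,b)=(1,2^j)$ with $j \ge 2$ we have $b \ge 4$, and the first assertion gives the same conclusion.

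This leaves only the exponents $j=0$ and $j=1$, i.e. the pairs $(1,1)$ and $(1,2)$. By Theorem~\ref{RadoRegularityThm} the coefficient vector $(a,a,-b)$ has a vanishing nonempty subset sum exactly when $a=b$ or $b=2a$, so these two pairs are precisely the regular members of the family: $(1,1)$ is the Schur equation $x+y=z$ and $(1,2)$ is $x+y=2z$. The pair $(1,2)$ is the stated exception, and I would also flag $a=b$ (Schur) explicitly, since it is regular and so must be set aside from the $dor \le 3$ claim as well. The work here is almost entirely bookkeeping, and the one point I expect to be the main obstacle is verifying that the thresholds $b \ge 4$ and $a \ge 2$ produced by Lemma~\ref{LemmaAlgebraicConditions} line up exactly with the coprime powers of two $(2^j,1)$ for $j\ge1$ and $(1,2^j)$ for $j\ge2$, so that every nonregular power-of-two equation is caught by one of the first two assertions and only the regular equations remain uncovered.
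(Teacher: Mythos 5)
Your proposal is correct and takes essentially the same route as the paper: the paper's entire proof reads that the results ``follow immediately from Lemma~\ref{LemmaAlgebraicConditions} and Theorem~\ref{RadoaxaybzThm}'', which is precisely the combination you spell out (condition~(ii) with $k=3$ giving $2\le\sqrt{b}$ for $b\ge 4$, condition~(i) giving $2a\le a^2$ for $a\ge 2$, and Rado's theorem plus the coprime power-of-two case analysis for the degree-of-regularity claim). Your additional observation that $a=b$ (the regular Schur case) must also be excepted is a fair catch about an imprecision in the lemma's statement itself, not a divergence in method.
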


\begin{proof}
 The results follow immediately from Lemma \ref{LemmaAlgebraicConditions} and Theorem \ref{RadoaxaybzThm}. 
\end{proof}



Table \ref{3colorTableaxaybz} gives the 3-color Rado numbers $R_3(a(x+y) = bz)$ for $1\le a,b\le10.$ We also give the values of some additional Rado numbers for the equations $a(x+y) = bz$ with $b > 10$ in the Appendix.

\subsection{Proofs of Main Results}

In this section we prove Theorem \ref{Theorem3ColorRadoFamilies} using an encoding of the Rado number problem similar to that in Section \ref{SATEncoding}. The key difference is that in this new encoding, indices of variables are indexed by symbolic polynomial expressions rather than fixed integers. 

Let $\E$ be a linear equation in $m$ variables, and let $S$ be a set of polynomials. Let $C \subseteq S^m$ be a set of solutions to $\E$. The variable $v_s^i$ is assigned the value true if and only if the expression $s\in S$ is assigned color $i$. Positive and optional clauses are constructed similarly to the method in Section \ref{SATEncoding}. The negative clauses are constructed from the solutions in $C$. For example, if $S = \{ia : 1\le i \le 7$\}, and $\E$ is the equation $x-y = 5z$, then $(x,y,z) = (7a,2a,a)$ is a solution. If $(7a,2a,a) \in C$, then we add the negative clause $\bar{v}_{7a}^1 \vee \bar{v}_{2a}^1 \vee \bar{v}_{a}^1$ to our formula. The following lemma formalizes this procedure and describes how to use these formulas to compute Rado numbers for families of equations. 

\begin{lemma}\label{ParametrizedFormulaLemma}
Let $\Sigma = \{\sigma_1,\dots,\sigma_\ell\}$ be a finite alphabet of parameters. Let $\E$ be a linear equation in the variables $x_1,\dots,x_m$ with coefficients in $\Sigma$. Let $S$ be a set of expressions over $\Sigma$, and let $C \subset S^m$ be a set of solutions to $\E$. We define $F_{k,S,C}(\E)$ to be the corresponding formula for the $k$-color Rado number generated by the clauses from $C$ as follows.
\begin{align*}
    F_{k,S,C}(\E) &:=  Pos_{k,S}\wedge Neg_{k,C} \wedge Opt_{k,S}, \text{where} \\
    Pos_{k,S} &:=\bigwedge_{s\in S} \left(\bigvee_{i=1}^k  v_{s}^i \right),  \\
    Neg_{k,C} &:= \bigwedge_{(s_1,\dots,s_m) \in C} \bigwedge_{i=1}^k \bigvee_{j=1}^m \bar{v}_{s_j}^i,\\ Opt_{k,S} &:= \bigwedge_{s\in S}\bigwedge_{1\le i_1< i_2 \le k} (\bar{v}_{s}^{i_1} \vee \bar{v}_{s}^{i_2}).
\end{align*}

Let $A \subset \Z^\ell$.  If $1\le s(a) = s(a_1,\dots,a_\ell) \le f(a)$ for all $s\in S$, $a\in A$ and $F_{k,S}(\E)$ is unsatisfiable, then $R_k(\E) \le f(a)$ for all $a\in A$. 
\end{lemma}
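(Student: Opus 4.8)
The plan is to argue the contrapositive separately for each fixed $a = (a_1,\dots,a_\ell) \in A$. Writing $\E(a)$ for the integer equation obtained by substituting $a_i$ for the parameter $\sigma_i$, and $s(a)$ for the integer obtained by evaluating the expression $s \in S$ at $a$, I would assume toward a contradiction that some $k$-coloring $\chi\colon [1,f(a)] \to \{1,\dots,k\}$ admits no monochromatic solution to $\E(a)$, and then manufacture from $\chi$ a satisfying assignment of the symbolic formula $F_{k,S,C}(\E)$. Since this formula is assumed unsatisfiable, the contradiction forces $R_k(\E(a)) \le f(a)$, and as $a \in A$ is arbitrary the conclusion follows for all $a \in A$.

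The satisfying assignment is the pullback of $\chi$ along evaluation at $a$: set the variable $v_s^i$ to true exactly when $\chi(s(a)) = i$. This is well defined because the hypothesis $1 \le s(a) \le f(a)$ guarantees that every $s(a)$ lies in the domain of $\chi$, so each expression $s$ receives exactly one color. Consequently every positive clause $\bigvee_{i=1}^k v_s^i$ of $Pos_{k,S}$ is satisfied, since the literal $v_s^{\chi(s(a))}$ is true, and every optional clause $\bar v_s^{i_1} \vee \bar v_s^{i_2}$ of $Opt_{k,S}$ is satisfied, since at most one color is true at $s$. These checks are routine.

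The crux is the negative clauses. A clause $\bigvee_{j=1}^m \bar v_{s_j}^i$ arising from a symbolic solution $(s_1,\dots,s_m) \in C$ and a color $i$ fails only if $\chi(s_j(a)) = i$ for every $j$. I would then observe that, because $(s_1,\dots,s_m)$ solves $\E$ as an identity of expressions over $\Sigma$, its specialization $(s_1(a),\dots,s_m(a))$ is a genuine integer solution of $\E(a)$, and that each coordinate lies in $[1,f(a)]$ by the range hypothesis. Thus $(s_1(a),\dots,s_m(a))$ would be a monochromatic solution of $\E(a)$ under $\chi$, contradicting the choice of $\chi$. Hence no negative clause fails, the pullback assignment satisfies $F_{k,S,C}(\E)$, and we reach the desired contradiction.

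The step I expect to require the most care is the specialization of symbolic solutions: one must confirm that membership in $C$ really forces $(s_1(a),\dots,s_m(a))$ to solve $\E(a)$ and to land inside the colored interval $[1,f(a)]$, which is precisely what the hypotheses on $C$ together with $1 \le s(a) \le f(a)$ supply. Two observations make the argument robust rather than delicate. First, distinct expressions in $S$ may collide to the same integer under evaluation at $a$, but this is harmless: $\chi$ is defined on integers and the pullback colors $s$ by the value $\chi(s(a))$, so colliding expressions receive a consistent color. Second, $C$ need not record every solution of $\E(a)$ and $S$ need not exhaust $[1,f(a)]$; using proper subsets merely removes clauses, so an unsatisfiable $F_{k,S,C}(\E)$ a fortiori certifies the upper bound, and indeed the pullback assignment satisfies the listed negative clauses automatically because $\chi$ avoids \emph{all} monochromatic solutions, not only those appearing in $C$.
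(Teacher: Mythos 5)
Your proof is correct and is essentially the argument the paper intends: the paper's justification is the one-line remark that, after substituting any $a\in A$, the specialized formula is an unsatisfiable subformula of the concrete Rado formula $F_{f(a)}^k(\E(a))$, and your construction is exactly the contrapositive of that containment, pulling a hypothetical good coloring $\chi$ of $[1,f(a)]$ back to a satisfying assignment of $F_{k,S,C}(\E)$. You also correctly handle the two details the paper leaves implicit, namely that collisions $s(a)=s'(a)$ do not break well-definedness of the pullback assignment, and that membership in $C$ plus the range hypothesis forces each specialized tuple $(s_1(a),\dots,s_m(a))$ to be a genuine solution of $\E(a)$ inside the colored interval.
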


In other words, if substituting values for parameters in a formula $F$ always gives a valid formula, i.e., one whose variables are bounded between 1 and $n$, then the unsatisfiability of $F$ gives an upper bound on the Rado numbers for a family of equations. For each equation $\E$ in the family, $F$ is an unsatisfiable subformula in the corresponding Rado number formula for $\E$.

We are able to prove Conjecture \ref{MyersConjecture} using Lemma \ref{ParametrizedFormulaLemma}. 

\begin{proof}[Proof of Theorem \ref{Theorem3ColorRadoFamilies}]
For the Rado numbers $R_3(x-y = (m-2)z)$, in Lemma \ref{ParametrizedFormulaLemma} let $\Sigma = \{m\}$, and let $\E$ be the equation $x-y = (m-2)z$. The set $S$ is a family of 685 polynomials and the set $C$ contains 9468 solutions to $\E$. It is straightforward to show that $1\le s(m)\le m^3-m^2-m-1$ for all $m\ge 3$ and all $s\in S$. The formula $F_{3,S}(\E)$ was shown to be unsatisfiable in 0.03 seconds by {\scshape{Satch}}, proving $R_3(x-y = (m-2) z) \le m^3-m^2-m-1$ for $m\ge 3$. By results in \cite{BBGeneralizedSchur} and \cite{MyersThesis}, we have $R_3(\E) \ge S(m,3) \ge m^3-m^2-m-1$ for $m\ge 3$, and so $R_3(x-y = (m-2) z) = m^3-m^2-m-1$ for $m\ge 3$.


For the Rado numbers $R_3(a(x-y) = (a-1)z)$, let $\Sigma = \{a\}$, and let $\E$ denote the equation $a(x-y) = (a-1)z$. We constructed a set $S$ of 1365 polynomials and a set $C$ of 20811 solutions to $\E$. It is again straightforward to show that $1\le s(a) \le a^3+(a-1)^2$ for all $a\ge 16$ and all $s\in S$. The formula $F_{3,S,C}(\E)$ was shown to be unsatisfiable in 0.05 seconds by {\scshape{Satch}}, proving $R_3(\E) \le a^3+(a-1)^2$ for $a\ge 16$. By Lemma \ref{Lemma3colorbEqualsaMinus1LowerBound} and Theorem \ref{MainRadoNumberComputations}, it follows that $R_3(\E) = a^3+(a-1)^2$ for $a\ge 3$. 

For the Rado numbers $R_3(a(x-y) = bz)$, let $\Sigma = \{a,b\}$, and let $\E$ denote the equation $a(x-y) = bz$. We constructed a set $S$ of 40645 polynomials and a set $C$ of 490897 solutions to $\E$. All polynomials $p(a,b)$ were verified to satisfy $1\le p(a,b) \le a^3$ for all integers $a,b$ satisfying $a\ge 16, b\ge1$, and $a\ge b+2$ using the software {\scshape GloptiPoly 3} \cite{GloptiPoly3}. Some additional valid inequalities were added to the region specified in {\scshape GloptiPoly 3} using elementary calculus techniques. The formula $F_{3,S,C}$ was shown to be unsatisfiable in 1.72 seconds by {\scshape{Satch}}, proving $R_3(a(x-y) = bz) \le a^3$ for all $(a,b)$ satisfying $a\ge 16$, $b\ge 1$, and $a\ge b+2$. By Theorem \ref{MainRadoNumberComputations} and Lemma \ref{a-adic coloring lower bound}, $R_3(a(x-y) = bz) = a^3$ for $a\ge 3, b\ge 1, a\ge b+2$ with $\gcd(a,b) = 1$. 

For each of these formulas, the sets $S$ and $C$ were constructed using a heuristic search procedure. We give details of this procedure in the Appendix.

\end{proof}
The results in Theorem \ref{MainRadoNumberComputations} were proven by computer.

\begin{proof}[Proof of Theorem \ref{MainRadoNumberComputations}] For each finite number $R_k(ax+by = cz)$, we produced a $k$-coloring of $[R_k(ax+by=cz) -1]$ that contained no monochromatic solutions to $ax+by = cz$ and verified using a SAT solver that the formula $F_{R_k(ax+by = cz)}^k(ax+by = cz)$ from the encoding in Section \ref{SATEncoding} is unsatisfiable. For the remaining cases we concluded $R_3(ax+by = cz) = \infty$ using Lemma \ref{LemmaAlgebraicConditions}.
\end{proof}
We are now able to prove Theorem \ref{TheoremDORComputations}. 

\begin{proof}[Proof of Theorem \ref{TheoremDORComputations}]
 Let $\E$ denote the equation $ax+by = cz$. For each triple $(a,b,c)$ that satisfies $1\le a,b,c\le 5$ and $a\le b$, we performed the following calculations. If a nonempty subset of $\{a,b,-c\}$ sums to zero, then $dor(\E) = \infty$ by Theorem \ref{RadoRegularityThm}. If $v_p(a), v_p(b)$, and $v_p(c)$ are all distinct modulo 3 for some prime $p$, or if one of the inequalities $a+b\le \frac{a^2}{c}$ or $a+b \le \sqrt{ac}$ holds, then $dor(\E) = 2$  by Lemma \ref{LemmaDistinctPrimesModK}, Lemma \ref{LemmaAlgebraicConditions}, and Theorem \ref{Rado2RegularityThm}. 

In all other cases $dor(\E) \le 3$ by either Theorem \ref{RadoaxaybzThm}, Lemma \ref{LemmaAlgebraicConditions}, Lemma \ref{LemmaUniquePrimes}, or Lemma \ref{LemmaTwoPrimes}. The computation of a finite Rado number in Theorem \ref{MainRadoNumberComputations} gives $dor(\E) = 3$.  
\end{proof}

We now prove Theorem \ref{TheoremDORUpperBoundAllm} and Corollary \ref{CorollaryDOR2Allm}

\begin{proof}[Proof of Theorem \ref{TheoremDORUpperBoundAllm} and Corollary \ref{CorollaryDOR2Allm}]
The proof of Theorem \ref{TheoremDORUpperBoundAllm} is immediate from Lemma \ref{LemmaAlgebraicConditions} condition $(ii)$ since $S = m-1 \le \lceil (m-1)^{\frac{k-1}{k-2}}\rceil^{\frac{k-2}{k-1}}.$ Corollary \ref{CorollaryDOR2Allm} follows from Theorem \ref{Rado2RegularityThm} and setting $k = 3$ in Theorem \ref{TheoremDORUpperBoundAllm}.
\end{proof}

\section{Rado CNF file generation}\label{SectionRadoCNFFileGeneration}

In this section we discuss some of the computational details from our calculations. The main workflow when computing a Rado number for a given linear equation $\E$ is the following:
\begin{itemize}
    \item Generate the CNF file encoding $F_n^k(\E)$.
    \item Apply symmetry breaking preprocessing.
    \item Determine the satisfiability of $F_n^k(\E)$ with SAT solvers.
    \item Adjust $n$ to find the smallest $n$ for which $F_n^k(\E)$ is unsatisfiable. 
\end{itemize}

In the following subsections, we explain how to achieve each step of the computation procedure. 
\subsection{Generating CNF Files}

Before we compute a given Rado number with a SAT solver, we must write the formula $F_n^k(\E)$ to a file in DIMACS format (see \cite{SATHandbook_1stEd}, Chapter 2). For many of our Rado number calculations, this step took far longer than the SAT solving process. 
The paper \cite{SchurFive} uses divide-and-conquer and several CPU years to solve a single difficult SAT formula in five colors; in contrast, we solve many easier problems with only three colors.
Generating negative clauses is the most difficult step as it involves enumerating the positive integer solutions to $\E$.

\subsubsection{Generating all solutions}



 For efficient solution generation to homogeneous linear equations, we used the built-in function \texttt{isolve} in {\scshape Maple} to parameterize the solutions. We then used {\scshape {SymPy}} to parse the output of {\scshape{Maple}} and generate the solutions with values in $[1,n]$. 
\begin{example}

If we want to generate all integer solutions in the interval $[1,1000]$ for the equation $43x - 5y = 13z$, we can feed $43x - 5y = 13z$ into {\scshape Maple}'s \texttt{isolve} function, which gives the output
$$\{x=i,y=6i-13j, z=i+5j\}.$$
Since we want all integer solutions within $[1,1000]$, we can loop $i$ from $1$ to $1000$ and manipulate the inequality
$$1 \le y=6i-13j \le 1000$$
into an inner loop where $j$ is looped from $\lceil \frac{1000-6i}{-13} \rceil$ to $\lfloor \frac{1-6i}{-13} \rfloor$.
For $z$, we can simply check whether $1 \le i+5j \le 1000$ is satisfied or not inside the loops to determine if $(x,y,z)$ is a  solution. 

\end{example}

\subsubsection{Writing Clauses to File}

Some of the formulas in our computations contain millions of clauses, and writing these clauses to a file is a time-consuming part of CNF file generation. For example, the CNF file which encodes $F_{16397}^3(5x+5y=19z)$ contains more than 20 million clauses, of which only $65588$ are positive or optional.


The algorithm to generate all the positive and optional clauses is done though {\scshape {Python}}. Since the parameterization of the equation $\E$ is also passed to {\scshape{Python}}, we also used {\scshape{Python}} to generate the CNF files. We were able to improve CNF file generation speeds for 3-color Rado numbers by implementing loop unrolling. 
\subsection{Symmetry Breaking}

Symmetry breaking is a SAT solving technique that can lead to drastic speedups by preventing the solver from looking in isomorphic areas of the search space. In our case we want to prevent the solver from searching for different permutations of the same coloring. 

We can break this symmetry in the formula $F_4^3(x+y =z)$, for example, by adding the clauses $\textcolor{red}{(v^1_1)}$ and $\textcolor{blue}{(v^2_2)}$. These clauses force number $1$ to be red and number $2$ to be blue. In general, if $\E$ is a linear homogeneous equation in three variables and we have a solution $(x,y,z)$ where two of $x,y,$ and $z$ are equal to each other, then we can add clauses that force the two equal variables to be the first color and the remaining variable to be the second color. For Rado numbers $R_k(\E)$ with $k>3$, we can also add clauses that break the symmetries on the other colors (see \cite{SchurFive}).

Generating a larger set of symmetry breaking clauses with more sophisticated preprocessing is more difficult and requires far more time than normal file generation. We included only a simple preprocessing step in our solving process. The benefit of this preprocessing becomes more apparent when the number of integers to color or the number of colors grows. Without symmetry breaking, {\scshape Satch} takes nearly 15 minutes to determine that $F^4_{45}(x+y=z)$ is unsatisfiable, but only a few seconds after adding symmetry breaking clauses.



\subsection{SAT Solvers}

Most of the SAT solving computations were done with the solver {\scshape Satch v0.4.17}, developed by Biere \cite{Satch2021}. We initially used {\scshape Satch} because it is remarkably fast at proving upper bounds for many 3-color Rado numbers and relatively easy to use. For example, {\scshape Satch} is able to prove the upper bounds for the values in the first column and last row of Table \ref{Threecoloraxminusayequalsbz} in under 10 seconds for each equation. Later, as we moved towards larger CNF files and more colors, {\scshape Satch} started to struggle.

In order to take advantage of the computation hardware that we had, we also experimented with  the multithreaded SAT solver {\scshape Glucose} \cite{Glucose}. In general, {\scshape Satch} performed better on smaller instances, but {\scshape Glucose} solved larger instances up to two times as quickly.


\subsection{Binary Search}
In order to compute the exact value of a Rado number $R_k(\E)$, we often must determine the satisfiability of $F_n^k(\E)$ for many values of $n$. A convenient property of the formulas $F_n^k(\E)$ is that if $m<n$, then we can obtain the formula $F_m^k(\E)$ simply by deleting all the clauses that contain variables $v_j^i$ with $j>m$. Therefore, once we have a formula $F_u^k(\E)$ that is unsatisfiable, we have an upper bound $R_k(\E) \le u$, and we no longer need to do any solution (negative clause) generation. After obtaining a lower bound $R_k(\E) > \ell$ with a satisfiable formula $F_\ell^k(\E)$, we can compute the exact value of the Rado number $R_k(\E)$ using binary search to jump between $\ell$ and $u$. Our initial guesses for suitable bounds on $R_k(\E)$ were made largely through trial and error. However, even with the poor estimates $10\le R_3(x-y = bz) \le 5000$ for $1\le b \le 15$, it is possible to compute the exact values for all of these numbers in under two hours. 
\section{Acknowledgements}
This project was supported by National Science Foundation grant DMS-1818969.


\bibliographystyle{abbrv}
\bibliography{Bibliography}

\appendix
\section{Additional Rado Number calculations}\label{AppendixRadoNumbers}
In this section we give additional bounds and exact values for various Rado numbers. 
\subsection{3-color Rado Numbers}

Table \ref{Table_Rado_3Color_additional} gives $R_3(ax+ay=bz)$ for $3\le a\le6$, $11\le b \le 20$. 

\begin{table}[H]
\caption{$R_3(ax+ay = bz)$}
\begin{center}
\begin{tabular}{c|cccccc} \label{Table_Rado_3Color_additional}
\diagbox{$b$}{$a$} &3&4&5 & 6 \\
\hline
11 & 2019 & 847 & 1958 & 1188 \\
12 & $\underline{\infty}$ & \underline{54} & 2400 & \underline{1} \\
13 & $\infty$ & 1710 & 3445 & 1963 \\
14 & $\infty$ & \underline{455} & 3675 & \underline{336} \\
15 & $\underline{\infty}$& 5408 & \underline{54} & \underline{105} \\
16 & $\infty$ &$\underline{\infty}$& 5725 & \underline{432} \\
17 & $\infty$ &$\infty$& 8330 & 4743 \\
18 &$\underline{\infty}$&$\underline{\infty}$& 12069 & \underline{54} \\
19 &$\infty$&$\infty$& 16397 & 6726 \\
20 &$\infty$&$\underline{\infty}$&$\underline{\infty}$& \underline{1025} \\
\end{tabular}
\end{center}
\end{table}

\subsection{4-color Rado Numbers }
Table \ref{Table_Rado_4Color_additional} gives some values for the 4-color Rado numbers $R_4(a(x-y) = bz)$. These numbers are considerably more difficult to compute than $R_3(a(x-y) = bz)$, and it took the solver {\scshape CaDiCaL} \cite{BiereFazekasFleuryHeisinger-SAT-Competition-2020-solvers} up to 20 hours to prove some of the upper bounds. Notably, $R_4(x-y = (m-2)z) = m^4-m^3-m^2-m-1$ for $4\le m \le 6$, which implies $S(4,4) = 171,\ S(5,4) = 469,$ and $S(6,4) = 1037$ by results in \cite{BBGeneralizedSchur} and \cite{MyersThesis}.   

\begin{table}[H]
\caption{$R_4(a(x-y) = bz)$}
\begin{center}
\begin{tabular}{c|ccccc} \label{Table_Rado_4Color_additional}
\diagbox{$b$}{$a$} &1 & 2 &3&4&5\\
\hline
1 & 45 & 56 &81&256&625 \\
2 & 171 &\underline{45}&103&\underline{56}&\\
3 & 469  &$>225$&\underline{45}& &\\
4 &1037  & & &&\\
\end{tabular}
\end{center}
\end{table}

\section{Degree of Regularity Values} 
Tables \ref{Table_dor_1} to \ref{Table_dor_5} give the values of $dor(ax+by =cz)$ for all $a,b,c$ with $1\le a,b,c\le 5$. 

\begin{table}[H]
\begin{minipage}[c]{0.45\textwidth}
\caption{$dor(ax+by = z)$}
\begin{tabular}{c|ccccc} \label{Table_dor_1}
\diagbox{$b$}{$a$}&1&2&3&4&5\\
\hline
1&$\infty$&$\infty$&$\infty$&$\infty$&$\infty$\\
2&$\infty$&$2$&$3$&$2$&$3$\\
3&$\infty$&$3$&$2$&$2$&$2$\\
4&$\infty$&$2$&$2$&$2$&$2$\\
5&$\infty$&$3$&$2$&$2$&$2$\\
\end{tabular}
\end{minipage}
\begin{minipage}[c]{0.45\textwidth}
\caption{$dor(ax+by = 2z)$}
\begin{tabular}{c|ccccc}\label{Table_dor_2}
\diagbox{$b$}{$a$}&1&2&3&4&5\\
\hline
1&$\infty$&$\infty$&$3$&$2$&$3$\\
2&$\infty$&$\infty$&$\infty$&$\infty$&$\infty$\\
3&$3$&$\infty$&$3$&$2$&$3$\\
4&$2$&$\infty$&$2$&$2$&$2$\\
5&$3$&$\infty$&$3$&$2$&$2$\\
\end{tabular}
\end{minipage}
\end{table}

\begin{table}[H]
\begin{minipage}[c]{0.45\textwidth}
\caption{$dor(ax+by = 3z)$}
\begin{tabular}{c|ccccc}\label{Table_dor_3}
\diagbox{$b$}{$a$}&1&2&3&4&5\\
\hline
1&$3$&$\infty$&$\infty$&$3$&$3$\\
2&$\infty$&$3$&$\infty$&$2$&$3$\\
3&$\infty$&$\infty$&$\infty$&$\infty$&$\infty$\\
4&$3$&$2$&$\infty$&$3$&$3$\\
5&$3$&$3$&$\infty$&$3$&$3$\\
\end{tabular}
\end{minipage}
\begin{minipage}[c]{0.45\textwidth}
\caption{$dor(ax+by = 4z)$}
\begin{tabular}{c|ccccc}\label{Table_dor_4}
\diagbox{$b$}{$a$}&1&2&3&4&5\\
\hline
1&$2$&$2$&$\infty$&$\infty$&$3$\\
2&$2$&$\infty$&$2$&$\infty$&$2$\\
3&$\infty$&$2$&$3$&$\infty$&$3$\\
4&$\infty$&$\infty$&$\infty$&$\infty$&$\infty$\\
5&$3$&$2$&$3$&$\infty$&$3$\\
\end{tabular}
\end{minipage}
\end{table}

\begin{table}[H]
\begin{minipage}[c]{0.45\textwidth}
\caption{$dor(ax+by = 5z)$}
\begin{tabular}{c|ccccc}\label{Table_dor_5}
\diagbox{$b$}{$a$}&1&2&3&4&5\\
\hline
1&$2$&$3$&$3$&$\infty$&$\infty$\\
2&$3$&$3$&$\infty$&$2$&$\infty$\\
3&$3$&$\infty$&$3$&$3$&$\infty$\\
4&$\infty$&$2$&$3$&$3$&$\infty$\\
5&$\infty$&$\infty$&$\infty$&$\infty$&$\infty$\\
\end{tabular}
\end{minipage}
\end{table}

\section{Heuristic search procedure in proof of Theorem \ref{Theorem3ColorRadoFamilies}}
Here we detail the method {\scshape FindPolynomials} which we used to find the sets $S$ of polynomials in the proof of Theorem $\ref{Theorem3ColorRadoFamilies}$. We give the version of {\scshape FindPolynomials} used for the equation $a(x-y) = (a-1)z$. The procedures for the other two equations were similar, but had minor modifications. 

In brief, we initialize $S$ to a set of polynomials $S_0$, and we use an auxiliary set of ``gaps" $G$ to add more polynomials to $S$. The procedure {\scshape BoundedIntegerPolynomial} returns true if and only if all of its arguments are polynomials $p(a) \in \Z[a]$ that satisfy $1\le p(a) \le a^3+(a-1)^2$ for all $a \ge 16$. The {\scshape FindPolynomials} procedure is not guaranteed to produce a set of clauses that yields an unsatisfiable formula, and it took several attempts to come up with suitable choices for the initial sets $S_0$ and $G_0$. For the equation $a(x-y) = (a-1)z$, we set $S_0 = \{1,a-1,a,a+1,a^2-1,a^2,a^2+1,a^3,a^3+(a-1)^2\}$, $G_0 = \{1,a-1,a,(a-1)^2,a^2\}$, and maxIterations = 3. Files containing the polynomials and clauses used in our formulas can be found in \cite{Ramsey_Research_Software}.
\begin{algorithm}[H]\label{SearchAlgorithm}
\caption{{\scshape FindPolynomials}($S_0,G_0$,maxIterations)}
\begin{algorithmic}

\State{$S \gets S_0$}
\State{$G \gets G_0$}
\For{$i = 1$ to maxIterations}
\For{$p,q$ in $S$}
 \State{$r \gets \frac{p-q}{a-1}$}
 \If{{\scshape BoundedIntegerPolynomial}($r$)}
 \State{$G \gets G \cup \{r\}$}
 \EndIf
\EndFor
\For{$p \in S$, $q \in G$}
\State{$r_+ \gets p+(a-1)q$}
\If{{\scshape BoundedIntegerPolynomial}($r_+$)}
\State{$S \gets S \cup \{r_+\}$}
 \EndIf
 \State{$r_- \gets p-(a-1)q$}
\If{{\scshape BoundedIntegerPolynomial}($r_-$)}
\State{$S \gets S \cup \{r_-\}$}
 \EndIf
\EndFor
\EndFor
\State{$C \gets \emptyset$}
\For{$p,q \in S$}
\State{$x \gets p$}
\State{$y \gets p-(a-1)q$}
\State{$z \gets aq$} \Comment{Here $(x,y,z)$ is a solution to $a(x-y) = (a-1)z.$}
\If{{\scshape BoundedIntegerPolynomial}($x,y,z$)}
 \State{$S \gets S \cup \{x,y,z\}$}
 \State{$C \gets C \cup \{\{x,y,z\}\}$}
\EndIf
\EndFor
\State{\Return{$S,C$}}

\end{algorithmic}
\end{algorithm}

\end{document}